\documentclass[a4paper,
               11pt,
               pdftex,
               normalheadings,
               headsepline,
               footsepline,
               onecolumn,
               headinclude,
               footinclude,
               DIV14,
               abstracton
               ]{scrartcl}

\usepackage
{
    graphicx,
    amssymb,
    amsmath,
    amsthm,
    xcolor,
    dsfont,
    algpseudocode,
    authblk,
}

\usepackage[bf]{caption}
\usepackage[colorlinks,
            pdffitwindow=false,
            plainpages=false,
            pdfpagelabels=true,
            pdfpagemode=UseOutlines,
            pdfpagelayout=SinglePage,
            bookmarks=false,
            colorlinks=true,
            hyperfootnotes=false,
            linkcolor=blue,
            citecolor=green!50!black,
            hyperindex,breaklinks]{hyperref}

\DeclareMathAlphabet{\mathpzc}{OT1}{pzc}{m}{it}

\newcommand{\subfiguretitle}[1]{{\scriptsize{#1}} \\[1mm] }
\newcommand{\R}{\mathbb{R}}
\newcommand{\C}{\mathbb{C}}

\providecommand{\abs}[1]{\left\lvert #1 \right\rvert}

\newcommand\xqed[1]{\leavevmode\unskip\penalty9999 \hbox{}\nobreak\hfill \quad\hbox{#1}}
\newcommand{\exampleSymbol}{\xqed{$\triangle$}}

\DeclareMathOperator{\trace}{trace}
\DeclareMathOperator{\rank}{rank}
\DeclareMathOperator{\Aut}{Aut}

\newtheorem{theorem}{Theorem}[section]

\newtheorem{lemma}[theorem]{Lemma}
\newtheorem{proposition}[theorem]{Proposition}
\newtheorem{definition}[theorem]{Definition}
\newtheorem{remark}[theorem]{Remark}
\newtheorem{example}[theorem]{Example}

\setcounter{MaxMatrixCols}{17}

\makeatletter
\renewcommand*\env@matrix[1][*\c@MaxMatrixCols c]{%
  \hskip -\arraycolsep
  \let\@ifnextchar\new@ifnextchar
  \array{#1}}
\makeatother

\begin{document}

\title{Sensing and Control in Symmetric Networks}
\author[*]{Michael Dellnitz}
\author[**]{Stefan Klus}
\affil[*]{\normalsize Department of Mathematics, University of Paderborn, D-33095 Paderborn, Germany}
\affil[**]{\normalsize Department of Mathematics and Computer Science, Freie Universit\"at Berlin, D-14195 Berlin, Germany}

\maketitle

\begin{abstract}
In engineering applications, one of the major challenges today is to develop reliable and robust control algorithms for complex networked systems. Controllability and observability of such systems play a crucial role in the design process. The underlying network structure may contain symmetries -- caused for example by the coupling of identical building blocks -- and these symmetries lead to repeated eigenvalues in a generic way. This complicates the design of controllers since repeated eigenvalues might decrease the controllability of the system. In this paper, we will analyze the relationship between the controllability and observability of complex networked systems and graph symmetries using results from representation theory. Furthermore, we will propose an algorithm to compute sparse input and output matrices based on projections onto the isotypic components. We will illustrate our results with the aid of two guiding examples, a network with $ D_4 $ symmetry and the Petersen graph.
\end{abstract}

\pagestyle{myheadings}
\thispagestyle{plain}

\begin{center}
This paper is dedicated to Marty Golubitsky on the occasion of his 70\textsuperscript{th} birthday.  
\end{center}

\section{Introduction}

In several real world applications, the crucial task is to design reliable control and sensing techniques for complex networked dynamical systems. Examples of such networks are, to name but a few, electric circuits, power grids, buildings, multi-agent systems, consensus problems, or social networks. These complex systems may exhibit symmetries in a variety of different ways due to the construction of the network or due to intrinsic characteristic properties of the system itself. It is well known that equivariance properties lead to the existence of multiple eigenvalues in a generic way. In bifurcation theory this has first been recognized by Marty Golubitsky and Ian Stewart (cf.~their ground-breaking book~\cite{GSS88}). But also in control theory this fact has already been realized, see for example~\cite{RM72}. Also in the recently published paper~\cite{WBSS15}, it is shown that symmetries in a network might decrease controllability and observability depending on the specific symmetry type. In~\cite{CM14}, the authors analyze the relationship between the controllability of a networked system and graph symmetries by utilizing so-called signed fractional automorphisms, a relaxation of the definition of an automorphism allowing not only permutation matrices but also matrices whose row and column sums are equal to one. The controllability and observability of networked linear systems has also been analyzed in \cite{FuHe13}. However, the related characterization is based on the extension of the notion of strict system equivalence to networks, and equivariance properties are not directly taken into account.

In this paper, we will exploit results from representation theory to determine whether a given system is controllable and observable and derive necessary and sufficient conditions. The idea of representation theory is to formulate abstract algebra problems as linear algebra problems. Each element of a group, for instance, can then be represented by an invertible matrix. Using this approach, it is possible to compute projections onto invariant subspaces and to decompose the problem into smaller subproblems, resulting in a block diagonal structure. Additionally, we will propose a new algorithm to compute sparse input and output matrices $ B $ and $ C $ for a given control problem. This is an important problem, in particular when only a limited number of control and observation input nodes is available~\cite{WBSS15}.

The paper is organized as follows: In Section~\ref{sec:Basic Concepts of Equivariance and Representations}, the basic concepts of equivariance and representation theory will be introduced. Section~\ref{sec:Control Theory} shows how results from representation theory can be used to determine controllability and observability of networked systems with inherent symmetries. The results will be illustrated with the aid of guiding examples. Section~\ref{sec:Conclusion} concludes with a brief summary and possible future work.

\section{Basic Concepts of Equivariance and Representations}
\label{sec:Basic Concepts of Equivariance and Representations}

In this section, we will review the basic concepts of equivariant dynamical systems theory which will be relevant for the considerations within this article. For background material and a description of the mathematical details, the reader is referred to the classical textbooks~\cite{GSS88, FS92}.

For simplicity, we initially restrict our attention to (uncontrolled) linear dynamical systems of the form
\begin{equation}
    \dot{x} = A x,
\end{equation}
where $ x \in \R^n $ and $ A \in \R^{n,n} $. Controllability and observability will be analyzed in the following section.

\subsection{Equivariance}

In the application we have in mind, the network structure of the underlying system induces certain structural properties of the matrix $ A $. In mathematical terms, this can be expressed by the following \emph{equivariance condition}: We assume that
\begin{equation}\label{eq:Aeq}
    \gamma A = A \gamma
\end{equation}
for all $ \gamma \in \Gamma $, where $ \Gamma \subset {\bf O}(n) $ is a finite group of orthogonal matrices.

\begin{example} \label{ex:network1}
The matrix
\begin{equation*}
    A =
    \begin{pmatrix}
        B   & C_1 & 0   & C_2 \\
        C_2 & B   & C_1 & 0   \\
        0   & C_2 & B   & C_1 \\
        C_1 & 0   & C_2 & B 
    \end{pmatrix},
\end{equation*}
with $ B, C_1, C_2 \in \R^{d,d} $, is equivariant with respect to the action of the cyclic group $ {\mathbb Z}_4 $ generated by the group element
\begin{equation*}
    R_1 = 
    \begin{pmatrix}
        0   & I_d & 0   & 0   \\
        0   & 0   & I_d & 0   \\
        0   & 0   & 0   & I_d \\
        I_d & 0   & 0   & 0 
    \end{pmatrix},
\end{equation*}
where here and in what follows $ I_d $ denotes the $ d $-dimensional identity matrix.

In fact, the matrix $ A $ reflects the structural properties of the network illustrated in Figure~\ref{fig:Z4_D4}(a). This network is invariant under rotations by 90, 180 and 270 degrees. Note that if the coupling structure to the nearest neighbors is identical, then the network is also invariant under reflections as shown in Figure~\ref{fig:Z4_D4}(b). In that case, $ C_1 = C_2 $ and the matrix $ A $ is equivariant with respect to the group $ D_4 $ generated by the group elements
\begin{equation*}
    R_1 =
    \begin{pmatrix}
        0   & I_d & 0   & 0   \\
        0   & 0   & I_d & 0   \\
        0   & 0   & 0   & I_d \\
        I_d & 0   & 0   & 0
    \end{pmatrix}
    \quad \text{and} \quad
    S_1 =
    \begin{pmatrix}
        0   & 0   & I_d & 0   \\
        0   & I_d & 0   & 0   \\
        I_d & 0   & 0   & 0   \\
        0   & 0   & 0   & I_d 
    \end{pmatrix}.
\end{equation*}
In general, the \emph{dihedral group} $ D_k $ consists of $ 2 k $ elements: $ k $ rotations $ R_0(=I_n), R_1, \ldots, R_{k-1} $ and $ k $ reflections $ S_1, \ldots, S_k $. \exampleSymbol

\begin{figure}[htb]
    \centering
    \begin{minipage}[t]{0.45\textwidth}
        \centering
        \subfiguretitle{(a)}
        \includegraphics[width=0.5\textwidth]{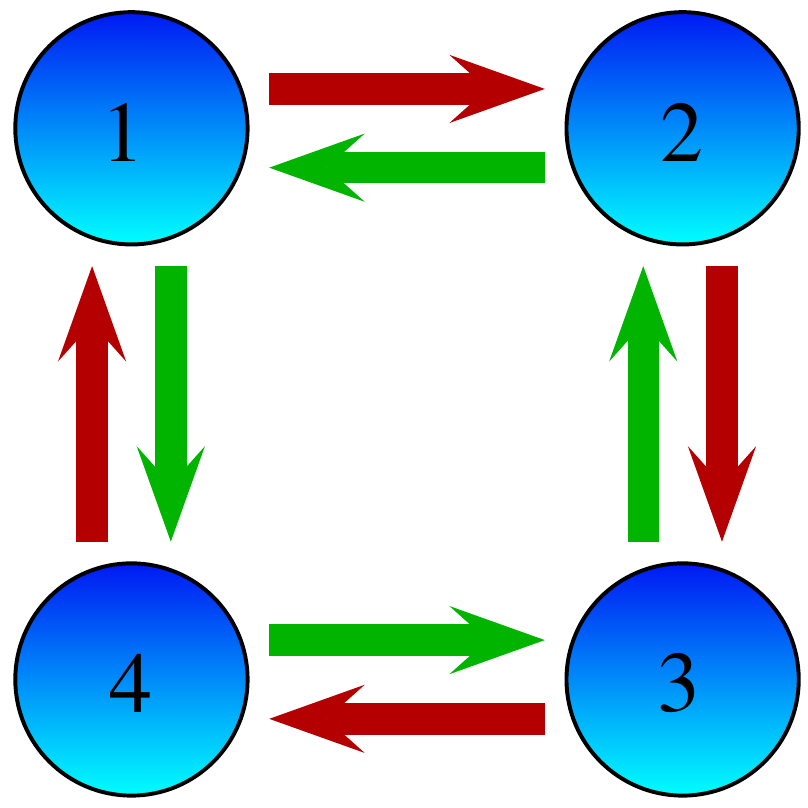} 
    \end{minipage}
    \begin{minipage}[t]{0.45\textwidth}
        \centering
        \subfiguretitle{(b)}
        \includegraphics[width=0.5\textwidth]{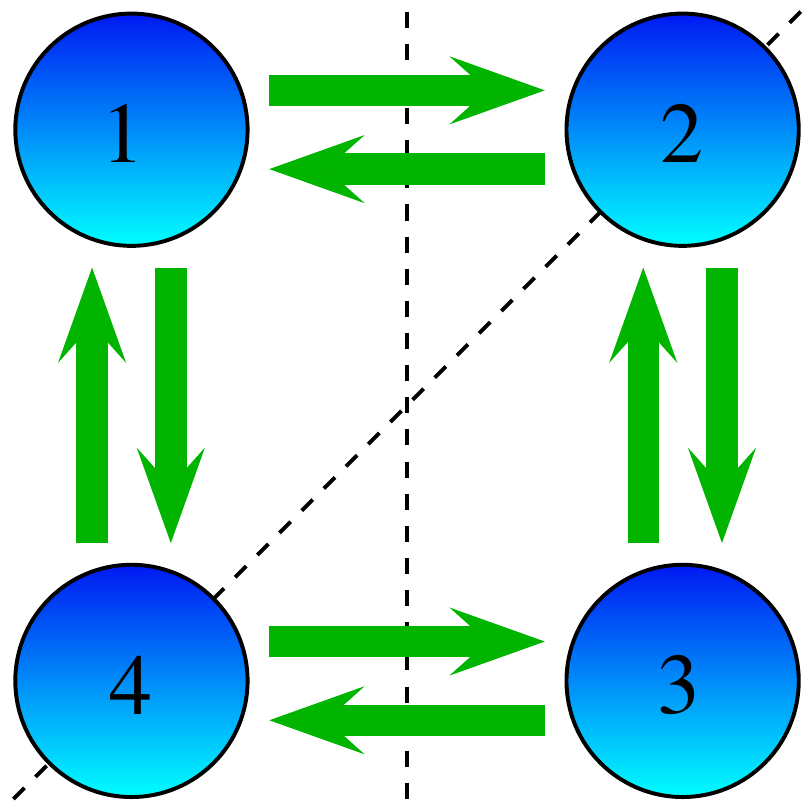} 
    \end{minipage}
    \caption{(a) Network representing the coupling structure corresponding to the equivariance property of the matrix $ A $ defined in Example~\ref{ex:network1}. The different colors of the arrows indicate the difference in the strength of the coupling to the nearest neighbors. These correspond to the coupling blocks $ C_1 $ and $ C_2 $ in the matrix $ A $. (b) $ D_4 $-symmetric network. The two axes of reflection are illustrated by dashed lines.}
    \label{fig:Z4_D4}
\end{figure}
\end{example}

\begin{example} \label{ex:Petersen}
Consider the Petersen graph $ \mathcal{P} $ shown in Figure~\ref{fig:Petersen}, cf.~\cite{HS93}. The system matrix has the following structure
\begin{equation*}
    A = 
    \begin{pmatrix}
        B & C & 0 & 0 & C & C & 0 & 0 & 0 & 0 \\
        C & B & C & 0 & 0 & 0 & C & 0 & 0 & 0 \\
        0 & C & B & C & 0 & 0 & 0 & C & 0 & 0 \\
        0 & 0 & C & B & C & 0 & 0 & 0 & C & 0 \\
        C & 0 & 0 & C & B & 0 & 0 & 0 & 0 & C \\
        C & 0 & 0 & 0 & 0 & B & 0 & C & C & 0 \\
        0 & C & 0 & 0 & 0 & 0 & B & 0 & C & C \\
        0 & 0 & C & 0 & 0 & C & 0 & B & 0 & C \\
        0 & 0 & 0 & C & 0 & C & C & 0 & B & 0 \\
        0 & 0 & 0 & 0 & C & 0 & C & C & 0 & B
    \end{pmatrix}.
\end{equation*}
Note that each vertex could again represent a $ d $-dimensional dynamical system. However, for simplicity, we will assume here that $ d = 1 $. The automorphism group of the Petersen graph is isomorphic to $ S_5 $, i.e.~$ \Aut(\mathcal{P}) \cong S_5 $. That is, there are $ 5! = 120 $ automorphisms. The group $ S_5 $ is generated by the transposition $ \pi_t = (1\;2) $ and the $ n $-cycle $ \pi_c = (1\;2\;3\;4\;5) $. Every permutation $ \pi \in S_5 $ acts as follows on the graph $ \mathcal{P} $: Each vertex with the assigned set $ \{a, b\} $ is mapped to the vertex with set $ \{\pi(a), \pi(b)\} $. Thus, $ \pi_t $ induces $ (3\;7)(4\;10)(8\;9) $, a transformation of the graph that fixes vertices $ 1, 2, 5, 6 $ and exchanges $ 3 \leftrightarrow 7 $, $ 4 \leftrightarrow 10 $, and $ 8 \leftrightarrow 9 $. The permutation $ \pi_c $ induces $ (1\;4\;2\;5\;3)(6\;9\;7\;10\;8) $, a rotation of the graph by $ 144 $ degrees. \exampleSymbol

\begin{figure}[htb]
    \centering
    \includegraphics[width=0.5\textwidth]{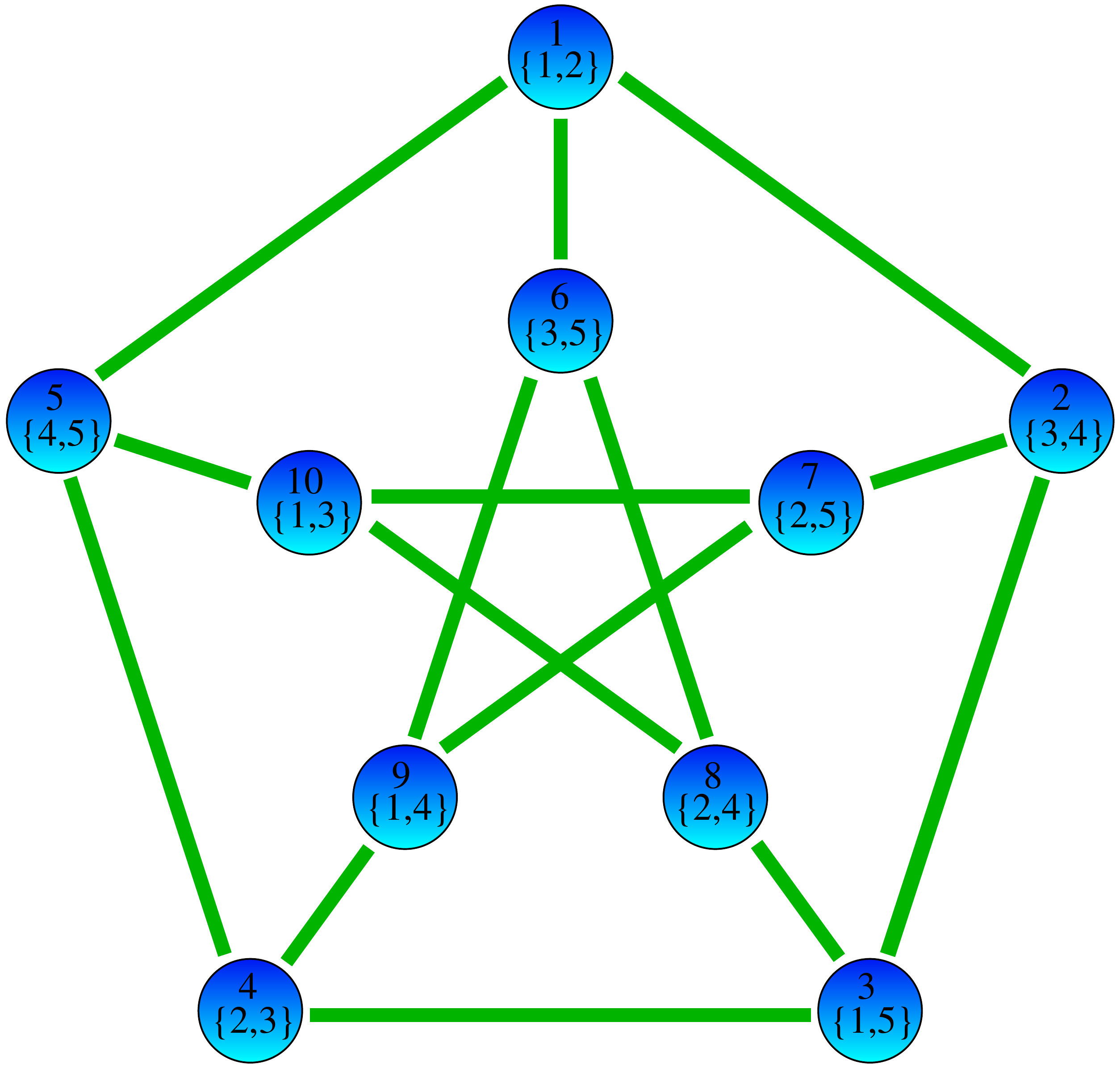} 
    \caption{Petersen graph $ \mathcal{P} $. The vertices of the graph are labeled $ 1, 2, \dots, 10 $. In addition to the vertex number, each vertex is assigned one of the subsets $ \{a, b\} \subset \{1, 2, 3, 4, 5\} $. Vertex $ i $ is then connected to vertex $ j $ if and only if the corresponding sets are disjoint, i.e.~$ \{a_i, b_i\} \cap \{a_j, b_j\} = \varnothing $.}
    \label{fig:Petersen}
\end{figure}
\end{example}

\subsection{Some Representation Theory}

We now briefly review some elementary concepts from group representation theory that are relevant within our framework. As above, let $ \Gamma \subset {\bf O}(n) $ be a finite group of orthogonal matrices.

\begin{definition}
A subspace $ V \subset \R^n $ is called \emph{$ \Gamma $-invariant} if $ \gamma v \in V $ for all $ v \in V $ and all $ \gamma \in \Gamma $. A $ \Gamma $-invariant subspace $ V $ is \emph{$ \Gamma $-irreducible} if it does not contain a proper nontrivial $ \Gamma $-invariant subspace.
\end{definition}

Observe that the operation of $ \Gamma $ on a $ \Gamma $-invariant subspace $V$ induces a corresponding \emph{representation}
\begin{equation}
    \vartheta_V : \Gamma \to {\bf L}(V), \quad \gamma\mapsto \gamma_{\vert V}
\end{equation}
of the group on that space. Here ${\bf L}(V)$ denotes the space of linear maps from $ V $ into $ V $. Representations of $ \Gamma $ induced by irreducible subspaces  are called \emph{irreducible representations} of $ \Gamma $. Up to isomorphism, a finite group possesses finitely many different irreducible representations. In fact, the irreducible representations of the finite groups are well known.

The \emph{character} $ \chi_\vartheta $ of an irreducible representation $ \vartheta $ assigns to each element of $ \vartheta $ its trace, that is
\begin{equation} \label{eq:character}
    \chi_\vartheta(\gamma) = \trace (\vartheta(\gamma))
\end{equation}
for all $ \gamma \in \Gamma $.

\begin{example}
In our guiding example -- the dihedral group $ \Gamma = D_4 $ acting on $ \R^n $, where $ n = 8 $ (i.e.~set $ d = 2 $ in Example~\ref{ex:network1}) -- there exist three different types of irreducible subspaces or representations, respectively. Here and in what follows $ \langle w^1, w^2, \ldots, w^k \rangle $ denotes the span of the vectors $ w^1, w^2, \ldots, w^k \in \R^n $.
\begin{itemize}
\item[(a)] $ \vartheta_1 : \Gamma \to \R^{1,1} $ is given by $ \vartheta_1(\gamma) = 1 $ for all $ \gamma \in \Gamma $. A corresponding irreducible subspace $ V_1\subset \R^n $ is given by $ \langle v^1\rangle $ with $ v^1 = (1, 0, 1, 0, 1, 0, 1, 0)^T $.
\item[(b)] $ \vartheta_2 : \Gamma \to \R^{1,1} $ is given by $ \vartheta_2(\gamma) = 1 $ for all $ \gamma \in \{R_0, R_2, S_1, S_3\} $ and $ \vartheta_2(\gamma) = -1 $ for all $ \gamma \in \{ R_1, R_3, S_2, S_4\} $. A corresponding irreducible subspace $ V_2 \subset \R^n $ is given by $ \langle v^2 \rangle $ with $ v^2 = (1, 0, -1, 0, 1, 0, -1, 0)^T $.
\item[(c)] $ \vartheta_3 : \Gamma \to \R^{2,2} $ is generated by 
\begin{equation*}
    \vartheta_3(R_1) =
    \begin{pmatrix}
        0 & -1 \\
        1 &  0
    \end{pmatrix}
    \quad \text{and} \quad 
    \vartheta_3(S_1) =
    \begin{pmatrix}
        1 & 0\\
        0 & -1
    \end{pmatrix}.
\end{equation*}
\end{itemize}
A corresponding irreducible subspace $ V_3 \subset \R^n $ is given by $ \langle v^3_1, v^3_2 \rangle $ with
\begin{equation*}
    v^3_1 = (0, 0, 1, 0, 0, 0, -1, 0)^T \quad \text{and} \quad v^3_2 = (-1, 0, 0, 0, 1, 0, 0, 0)^T.
    \tag*{\exampleSymbol}
\end{equation*}
\end{example}

\begin{remark}
\begin{itemize}
\item[(a)] We remark that the group $ D_4 $ possesses two additional irreducible representation, namely $ \vartheta_{4,5}: \Gamma \to \R^{1,1} $, given by
\begin{equation*}
    \vartheta_4(\gamma) = 1 \text{ for all } \gamma \in \{R_0, R_1, R_2, R_3\}, 
    \quad
    \vartheta_4(\gamma) = -1 \text{ for all } \gamma \in \{ S_1, S_2, S_3, S_4\}
\end{equation*}
and
\begin{equation*}
    \vartheta_5(\gamma) = 1 \text{ for all } \gamma \in \{R_0, R_2, S_2, S_4\},
    \quad
    \vartheta_5(\gamma) = -1 \text{ for all } \gamma \in \{ R_1, R_3, S_1, S_3\}.
\end{equation*}
In our network example, these are not relevant since corresponding irreducible subspaces do not exist in $ \R^n $ in this case. To see this, consider the representation $ \vartheta_4 $. Here a vector $ v \in \R^n \setminus \{0\} $ satisfying
\begin{equation*}
    \gamma v = v \text{ for all } \gamma \in \{R_0, R_1, R_2, R_3\}
\end{equation*}
has already full $ D_4 $-symmetry and therefore cannot satisfy $\gamma v = -v$ for all $\gamma \in  \{S_1, S_2, S_3, S_4\}$. The analogue argument is valid for the irreducible representation $\vartheta_5$.
\item[(b)] For the cyclic case $ C_1 \ne C_2 $ -- that is, $ \Gamma = {\mathbb Z}_4 $ acting on $ \R^n $ as shown in Figure~\ref{fig:Z4_D4}(a)~--, the four one-dimensional complex irreducible representations are generated by
\begin{equation*}
    \vartheta_1(R_1) = 1, \quad \vartheta_2(R_1) = -1, \quad \vartheta_3(R_1) = i, \quad \text{and} \quad \vartheta_4(R_1) = -i.
\end{equation*}
Over the real numbers, the corresponding two-dimensional irreducible representation of the cyclic group $ \mathbb{Z}_4 $ is
\begin{equation*}
    \widetilde{\vartheta}_3(R_1) =
    \begin{pmatrix}
        0 & -1 \\
        1 &  0
    \end{pmatrix}.
\end{equation*}
\end{itemize}
\end{remark}

Observe that eigenspaces of the matrix $ A $ are $ \Gamma $-invariant. This follows immediately from the equivariance of $ A $. That is, let $ v $ be an eigenvector of the matrix $ A $ with corresponding eigenvalue $ \lambda $, then
\begin{equation*}
    A v = \lambda v \Longrightarrow A(\gamma v) = \gamma A v = \lambda (\gamma v).
\end{equation*}
Thus, the existence of higher-dimensional irreducible representations naturally leads to the existence of repeated eigenvalues of the matrix $ A $. It is this observation which is of particular relevance for control and observation purposes and we will now further elaborate on this.

\begin{definition}
\begin{itemize}
\item[(a)] Let $ \vartheta $ be an irreducible representation of $\Gamma$ of dimension $n_\vartheta$.
Then $\vartheta $ is \emph{absolutely irreducible} if the only linear maps on $\R^{n_\vartheta}$ which commute with all $\gamma\in\Gamma$ are multiples of the identity.
\item[(b)]  A representation $\vartheta_W$ of $\Gamma$ is called \emph{$\Gamma$-simple} if either
\begin{itemize}
\item[(i)] $\vartheta_W$ is irreducible but not absolutely irreducible, or
\item[(ii)] $\vartheta_W$ is the sum of two isomorphic absolutely irreducible
representations $\vartheta_V$.
\end{itemize}
\end{itemize}
\end{definition}

\begin{example}
The irreducible representation $ \vartheta_3 $ of $ D_4 $ is absolutely irreducible. The irreducible representation $ \widetilde{\vartheta}_3 $ of $ \mathbb{Z}_4 $ is not absolutely irreducible. To see this, note that $ \mathbb{Z}_4 $ is a commutative group. \exampleSymbol
\end{example}

Using this terminology, we can now describe the generic structure of eigenspaces of equivariant matrices. This result is due to M.~Golubitsky and I.~Stewart and can be found in~\cite {GSS88}.

\begin{proposition} \label{prop:eigenstructure}
Let $ \lambda $ be an eigenvalue of a matrix $ A $ satisfying  the equivariance condition~\eqref{eq:Aeq}.
\begin{itemize}
\item[(i)] Suppose that $ \lambda \in \R $ and let $ W $ be the corresponding eigenspace. Then, generically, $W$ is absolutely irreducible.
\item[(ii)] Suppose that $ \lambda \in \C \setminus \R $ and let $ W\subset\R^n $ be the space spanned by real and imaginary parts of corresponding eigenvectors. Then, generically, $ W $ is $ \Gamma $-simple.
\end{itemize}
\end{proposition}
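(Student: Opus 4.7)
The strategy is to combine the isotypic decomposition with Schur's lemma, reducing the claim to a genericity statement about spectra of matrices over the commuting division algebra of each irreducible representation. First I would decompose $\R^n = \bigoplus_\vartheta U_\vartheta$ into isotypic components indexed by the real irreducible representations $\vartheta$ of $\Gamma$ that occur in $\R^n$. Since $A$ is $\Gamma$-equivariant, Schur's lemma forces $A$ to preserve each $U_\vartheta$ (equivariant maps between distinct isotypic components vanish), so $A = \bigoplus_\vartheta A_\vartheta$ with $A_\vartheta := A|_{U_\vartheta}$, and it suffices to analyse the eigenstructure one isotypic component at a time.

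Next I would invoke the standard identification $U_\vartheta \cong V_\vartheta \otimes \mathbb{D}_\vartheta^{m_\vartheta}$, where $V_\vartheta$ is a model copy of the irreducible representation, $m_\vartheta$ is its multiplicity in $\R^n$, and $\mathbb{D}_\vartheta$ is its commuting division algebra (equal to $\R$ exactly when $\vartheta$ is absolutely irreducible, and $\C$ or $\mathbb{H}$ otherwise). Schur's lemma then realises $A_\vartheta$ as $\mathrm{id}_{V_\vartheta} \otimes M_\vartheta$ with $M_\vartheta \in \mathbb{D}_\vartheta^{m_\vartheta \times m_\vartheta}$ arbitrary. The space of matrices satisfying \eqref{eq:Aeq} is thus affinely parametrised by the tuple $(M_\vartheta)_\vartheta$, and generically in this parameter space each $M_\vartheta$ has simple spectrum over $\mathbb{D}_\vartheta$ with pairwise disjoint spectra across different $\vartheta$. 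This open and dense condition is the precise meaning of ``generic'' that I would use.

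Finally I would translate this back into eigenspaces of $A$. For absolutely irreducible $\vartheta$, i.e.\ $\mathbb{D}_\vartheta = \R$, a simple real eigenvalue of $M_\vartheta$ produces an $A$-eigenspace equal to exactly one copy of $V_\vartheta$, hence absolutely irreducible, proving (i); a simple complex-conjugate pair of eigenvalues of $M_\vartheta$ produces a two-dimensional complex eigenspace whose real and imaginary parts span two isomorphic real copies of $V_\vartheta$, hence a $\Gamma$-simple subspace by clause (ii) of the definition. For non-absolutely irreducible $\vartheta$, a simple eigenvalue of $M_\vartheta$ must give rise to a genuinely complex eigenvalue of $A$, since a real eigenvalue would force $A$ to act as a real scalar on a $\Gamma$-invariant copy of $V_\vartheta$, contradicting Schur's lemma when $\mathbb{D}_\vartheta \neq \R$; here the real plus imaginary span inside $U_\vartheta$ is a single irreducible copy of $V_\vartheta$, which is $\Gamma$-simple by clause (i).

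The main obstacle will be the careful bookkeeping in the non-absolutely-irreducible case: one has to relate the spectrum of $M_\vartheta$ over $\C$ or $\mathbb{H}$ to the actual real Jordan structure of $A_\vartheta$, verify the dimension counts on the real side, and confirm that the genericity condition (simple spectrum within each $M_\vartheta$, mutually disjoint across $\vartheta$) is indeed dense and open in the affine space of $\Gamma$-equivariant matrices on $\R^n$, so that the word ``generically'' in the statement is fully justified.
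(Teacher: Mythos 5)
The paper does not prove this proposition at all; it is quoted from \cite{GSS88}, so your attempt can only be measured against the standard Golubitsky--Stewart argument. Your framework --- isotypic decomposition, Schur's lemma identifying $A|_{U_\vartheta}$ with a matrix $M_\vartheta$ over the commuting division algebra $\mathbb{D}_\vartheta\in\{\R,\C,\mathbb{H}\}$, and genericity imposed on the tuple $(M_\vartheta)_\vartheta$ --- is exactly that standard route, and your bookkeeping in the absolutely irreducible case (real simple eigenvalue gives one copy of $V_\vartheta$, a complex-conjugate pair gives two isomorphic copies, hence $\Gamma$-simple by clause (ii)) is correct.

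There are, however, two concrete problems in the non-absolutely-irreducible case. First, your claim that a real eigenvalue of $M_\vartheta$ would make $A$ act as a real scalar on a copy of $V_\vartheta$ ``contradicting Schur's lemma when $\mathbb{D}_\vartheta\neq\R$'' is false: $\R$ embeds in every $\mathbb{D}_\vartheta$, so an equivariant map acting as a real scalar on an irreducible subspace is perfectly consistent with Schur's lemma (indeed $A=\lambda I$ is equivariant). The exclusion of real eigenvalues cannot be derived; it is itself a genericity statement (a generic matrix over $\C$ or $\mathbb{H}$ has no real eigenvalues) and must be added to your list of generic conditions. Second, the genericity condition you state --- simple spectrum of each $M_\vartheta$ over $\mathbb{D}_\vartheta$, disjoint spectra across distinct $\vartheta$ --- is not strong enough to yield $\Gamma$-simplicity: if $\mathbb{D}_\vartheta=\C$ and both $\mu$ and $\bar\mu$ occur as (distinct, simple) eigenvalues of $M_\vartheta$, the complexification of $A|_{U_\vartheta}$ has $\mu$ as an eigenvalue with contributions from both, and the resulting real space $W$ is a sum of \emph{two} isomorphic copies of the non-absolutely-irreducible $V_\vartheta$, which is not $\Gamma$-simple under the paper's definition. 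You therefore also need $\mathrm{spec}(M_\vartheta)\cap\overline{\mathrm{spec}(M_\vartheta)}=\emptyset$ (and its quaternionic analogue) in the generic set. Both extra conditions are open and dense, so the argument closes once they are inserted, but as written your third paragraph does not.
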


Observe that the first part of Proposition~\ref{prop:eigenstructure} implies that, generically, there will be no multiple eigenvalues (real or complex) in problems which possess $ \mathbb{Z}_k $-symmetry. This follows from the fact that the two-dimensional representation of $ \mathbb{Z}_k $ is not absolutely irreducible. However, we do expect to obtain multiple eigenvalues (real or complex) in networks which are $ D_k $-symmetric or which have the structure of the Petersen graph. 

\begin{definition}
Let $ \vartheta $ be an irreducible representation. Then we denote by $ W_\vartheta $ its \emph{isotypic component}, that is, the subspace of $ \R^n $ which consists of the sum of all irreducible subspaces isomorphic to $ \vartheta $.
\end{definition}

We are now in the position to state a fundamental result from group representation theory which is of importance for our investigations.

\begin{theorem}
We have
\begin{equation} \label{isodecom}
    \R^n = \bigoplus W_\vartheta,
\end{equation}
where the sum is taken over all irreducible representations of $ \Gamma $ acting on $ \R^n $. Moreover, each isotypic component is an invariant subspace for the matrix $ A $:
\begin{equation}
    A(W_\vartheta) \subset W_\vartheta.
\end{equation}
\end{theorem}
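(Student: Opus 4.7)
The plan is to deduce both claims from two classical ingredients: Maschke's theorem (complete reducibility of representations of a finite group), which yields a decomposition of $\R^n$ into irreducibles that can be grouped by isomorphism type, and the existence of an explicit isotypic projector built from the group action, which therefore commutes with $A$.

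For the decomposition~\eqref{isodecom}, I would first show that $\R^n$ is a direct sum of $\Gamma$-irreducible subspaces. Since $\Gamma \subset {\bf O}(n)$, every $\Gamma$-invariant subspace $V$ has a $\Gamma$-invariant orthogonal complement: for $w \in V^\perp$, $\gamma \in \Gamma$ and $v \in V$, one has $\sprod{\gamma w}{v} = \sprod{w}{\gamma^{-1} v} = 0$ because $\gamma^{-1} v \in V$. An induction on dimension then produces $\R^n = V_1 \oplus \cdots \oplus V_m$ with each $V_i$ irreducible. Collecting those $V_i$ whose induced representation is isomorphic to $\vartheta$ defines $W_\vartheta$, and summing over all irreducible representations exhausts $\R^n$.

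The step where I expect the main obstacle is establishing that this sum is actually direct (and that $W_\vartheta$ does not depend on the chosen decomposition into irreducibles). The key tool here is Schur's lemma: between two non-isomorphic irreducible $\Gamma$-representations, every $\Gamma$-equivariant linear map is zero. Applying this to the $\Gamma$-equivariant projections between summands in any decomposition of $W_\vartheta \cap \sum_{\vartheta' \not\cong \vartheta} W_{\vartheta'}$ forces this intersection to be $\{0\}$. A mild subtlety arises because we work over $\R$, where the endomorphism ring of an irreducible can be $\R$, $\C$, or $\mathbb{H}$; but only the vanishing of $\mathrm{Hom}_\Gamma$ between non-isomorphic irreducibles is needed, and this still holds.

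Finally, for the $A$-invariance I would use the isotypic projector $\pi_\vartheta : \R^n \to W_\vartheta$. A classical averaging argument over $\Gamma$ (whose coefficients are built from the character $\chi_\vartheta$ of~\eqref{eq:character}) expresses this projector as a linear combination of the group elements $\gamma \in \Gamma$. The equivariance condition~\eqref{eq:Aeq} gives $A\gamma = \gamma A$ for every $\gamma \in \Gamma$, so $A$ commutes with any such linear combination; in particular $A\pi_\vartheta = \pi_\vartheta A$. Consequently
\begin{equation*}
    A(W_\vartheta) = A\pi_\vartheta(\R^n) = \pi_\vartheta(A(\R^n)) \subset \pi_\vartheta(\R^n) = W_\vartheta,
\end{equation*}
which completes the argument.
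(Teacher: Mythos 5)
The paper states this theorem without proof, presenting it as a classical fact from representation theory (with the relevant background in \cite{GSS88, FS92}), so there is no in-paper argument to compare against; your job is simply to supply the standard proof, and you do so correctly. The three ingredients are all in order: complete reducibility via $\Gamma$-invariant orthogonal complements (valid here precisely because $\Gamma \subset {\bf O}(n)$, as your computation $\sprod{\gamma w}{v} = \sprod{w}{\gamma^{-1}v}$ shows), Schur's lemma in the form ``$\mathrm{Hom}_\Gamma$ between non-isomorphic irreducibles is zero'' to get both the independence of $W_\vartheta$ from the chosen decomposition and the directness of the sum, and the correct observation that over $\R$ only this half of Schur is needed, not the statement about endomorphism rings. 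Two minor remarks. First, the directness step is compressed; the cleanest phrasing is that any irreducible $U \cong \vartheta$ inside a fixed direct sum $\bigoplus_j U_j$ of irreducibles has zero equivariant projection onto every $U_j \not\cong \vartheta$, hence lies in the span of the type-$\vartheta$ summands, which simultaneously identifies $W_\vartheta$ with that span and forces $W_\vartheta \cap \sum_{\vartheta' \not\cong \vartheta} W_{\vartheta'} = \{0\}$. Second, your $A$-invariance argument leans on the classical fact that the character average \eqref{eq:projiso} really is the projector onto $W_\vartheta$ (and over $\R$ the normalizing constant changes for irreducibles that are not absolutely irreducible, though any projector in the group algebra suffices for the commutation argument). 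A more self-contained alternative avoids the projector entirely: for an irreducible $V$ of type $\vartheta$, equivariance of $A$ makes $A|_V \colon V \to A(V)$ a $\Gamma$-isomorphism onto its image whenever it is nonzero, so $A(V)$ is again irreducible of type $\vartheta$ and hence contained in $W_\vartheta$ by definition; summing over all such $V$ gives $A(W_\vartheta) \subset W_\vartheta$ directly.
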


In representation theory, the decomposition~\eqref{isodecom} is called the \emph{isotypic decomposition}. Consequently, $ A $ possesses a block diagonal structure if we choose a basis according to this decomposition of $ \R^n $.

The projections onto the isotypic components are explicitly known. In fact, let $ \vartheta $ be an irreducible representation of dimension $ n_\vartheta $. Then the projection onto the corresponding isotypic component is given by (see \eqref{eq:character})
\begin{equation} \label{eq:projiso}
    P_\vartheta x = \frac{n_\vartheta}{\abs{\Gamma}} \sum_{\gamma \in \Gamma} \chi_\vartheta(\gamma) \gamma x.
\end{equation}

\begin{example} \label{ex:guide}
First we compute the projections $ P_i $ onto the isotypic components $ W_i $ for the irreducible representations $ \vartheta_i $, $ i = 1, 2, 3 $, of our guiding example. We obtain by using \eqref{eq:projiso}
\begin{equation*}
    P_1 = \frac{1}{4}
    \begin{pmatrix}[rrrr]
        I_d & I_d & I_d & I_d \\
        I_d & I_d & I_d & I_d \\
        I_d & I_d & I_d & I_d \\
        I_d & I_d & I_d & I_d
    \end{pmatrix},
    \quad
    P_2 = \frac{1}{4} 
    \begin{pmatrix}[rrrr]
         I_d & -I_d &  I_d & -I_d \\
        -I_d &  I_d & -I_d &  I_d \\
         I_d & -I_d &  I_d & -I_d \\
        -I_d &  I_d & -I_d &  I_d
    \end{pmatrix},
    \quad \text{and}
\end{equation*}
\begin{equation*}
    P_3 = \frac{1}{2} 
    \begin{pmatrix}[rrrr]
         I_d &    0 & -I_d &    0 \\
           0 &  I_d &    0 & -I_d \\
        -I_d &    0 &  I_d &    0 \\
           0 & -I_d &    0 &  I_d
    \end{pmatrix}.
\end{equation*}

The corresponding isotypic components are
\begin{equation*}
    \arraycolsep=1pt
    \begin{array}{lrrrrl}
        W_1 = \left\{\right.( & a, &  a, &  a, &  a &)^T \in \R^n : a   \in \R^d \left.\right\}, \\
        W_2 = \left\{\right.( & a, & -a, &  a, & -a &)^T \in \R^n : a   \in \R^d \left.\right\}, \\
        W_3 = \left\{\right.( & a, &  b, & -a, & -b &)^T \in \R^n : a,b \in \R^d \left.\right\}.
    \end{array}
\end{equation*}

For example, for $ n = 8 $, $ d = 2 $, and
\begin{equation*}
    A =
    \begin{pmatrix}[cc|cc|cc|cc]
        10  & -10  &  6 &   3 &  0 &   0 &  6 &   3 \\
         3  &  30  &  1 &   5 &  0 &   0 &  1 &   5 \\ \hline
         6  &   3  & 10 & -10 &  6 &   3 &  0 &   0 \\
         1  &   5  &  3 &  30 &  1 &   5 &  0 &   0 \\ \hline
         0  &   0  &  6 &   3 & 10 & -10 &  6 &   3 \\
         0  &   0  &  1 &   5 &  3 &  30 &  1 &   5 \\ \hline
         6  &   3  &  0 &   0 &  6 &   3 & 10 & -10 \\
         1  &   5  &  0 &   0 &  1 &   5 &  3 &  30
    \end{pmatrix},
\end{equation*}
we obtain with respect to a basis, composed from bases of $ W_1 $, $ W_2 $, and $ W_3 $,
\begin{equation*}
    \underbrace{
    \begin{pmatrix}[r]
        1 \\ 0 \\ 1 \\ 0 \\ 1 \\ 0 \\ 1 \\ 0
    \end{pmatrix},
    \begin{pmatrix}[r]
        0 \\ 1 \\ 0 \\ 1 \\ 0 \\ 1 \\ 0 \\ 1
    \end{pmatrix}}_{W_1},
    \underbrace{
    \begin{pmatrix}[r]
        1 \\ 0 \\ -1 \\ 0 \\ 1 \\ 0 \\ -1 \\ 0
    \end{pmatrix},
    \begin{pmatrix}[r]
        0 \\ 1 \\ 0 \\ -1 \\ 0 \\ 1 \\ 0 \\ -1
    \end{pmatrix}}_{W_2},
    \underbrace{
    \begin{pmatrix}[r]
        1 \\ 0 \\ 1 \\ 0 \\ -1 \\ 0 \\ -1 \\ 0
    \end{pmatrix},
    \begin{pmatrix}[r]
        -2 \\ 1 \\ 0 \\ 1 \\ 2 \\ -1 \\ 0 \\ -1
    \end{pmatrix},
    \begin{pmatrix}[r]
        1 \\ 0 \\ 0 \\ 0 \\ -1 \\ 0 \\ 0 \\ 0
    \end{pmatrix},
    \begin{pmatrix}[r]
        0 \\ 0 \\ 1 \\ -1 \\ 0 \\ 0 \\ -1 \\ 1
    \end{pmatrix}}_{W_3},
\end{equation*}
the following block diagonal structure for $ A $
\begin{equation*}
    \begin{pmatrix}
        22 & -4 & \omit \vrule &  0 &   0 & \omit        &  0 &  0 &  0 &  0 \\
         5 & 40 & \omit \vrule &  0 &   0 & \omit        &  0 &  0 &  0 &  0 \\ \cline{1-5}
         0 &  0 & \omit \vrule & -2 & -16 & \omit \vrule &  0 &  0 &  0 &  0 \\
         0 &  0 & \omit \vrule &  1 &  20 & \omit \vrule &  0 &  0 &  0 &  0 \\ \cline{3-10}
         0 &  0 & \omit        &  0 &   0 & \omit \vrule & 10 & -4 & -3 & -7 \\
         0 &  0 & \omit        &  0 &   0 & \omit \vrule &  3 & 24 &  3 &  0 \\
         0 &  0 & \omit        &  0 &   0 & \omit \vrule &  6 & 22 & 19 &  7 \\
         0 &  0 & \omit        &  0 &   0 & \omit \vrule &  0 & -6 &  3 & 27
    \end{pmatrix}.
    \tag*{\exampleSymbol}
\end{equation*}
\end{example}

\begin{remark}
If $ C_1 \ne C_2 $ in our guiding example, we obtain the complex isotypic components
\begin{equation*}
    \arraycolsep=1pt
    \begin{array}{lrrrrl}
        W_1 = \left\{\right.( & a, &    a, &  a, &   a)^T \in \C^n : a \in \R^d \left.\right\}, \\
        W_2 = \left\{\right.( & a, &   -a, &  a, &  -a)^T \in \C^n : a \in \R^d \left.\right\}, \\
        W_3 = \left\{\right.( & a, &  -ia, & -a, &  ia)^T \in \C^n : a \in \R^d \left.\right\}, \\
        W_4 = \left\{\right.( & a, &   ia, & -a, & -ia)^T \in \C^n : a \in \R^d \left.\right\}.
    \end{array}
\end{equation*}
\end{remark}

\begin{example}
For the Petersen graph introduced in Example~\ref{ex:Petersen}, we obtain the irreducible representations:
\begin{itemize}
\item[(a)] $ \vartheta_1 : \Gamma \to \R^{1,1} $ is again given by $ \vartheta_1(\gamma) = 1 $ for all $ \gamma \in \Gamma $.
\item[(b)] $ \vartheta_2 : \Gamma \to \R^{5,5} $ is generated by 
\begin{equation*}
    \vartheta_2(\pi_t) =
    \begin{pmatrix}[rrrrr]
        0 & 0 & 0 & 1 & 0 \\
        0 & 0 & 0 & 0 & 1 \\
        0 & 0 & 1 & 0 & 0 \\
        1 & 0 & 0 & 0 & 0 \\ 
        0 & 1 & 0 & 0 & 0 
    \end{pmatrix}
    \quad \text{and} \quad
    \vartheta_2(\pi_c) =
    \begin{pmatrix}[rrrrr]
        0 & -1 & -1 &  0 & -1 \\
       -1 &  1 &  1 & -1 &  1 \\
        1 &  0 & -1 &  0 & -1 \\
        0 &  1 &  0 &  0 &  0 \\
        0 & -1 & -1 &  1 &  0
    \end{pmatrix}.
\end{equation*}
\item[(c)] $ \vartheta_3 : \Gamma \to \R^{4,4} $ is generated by 
\begin{equation*}
    \vartheta_3(\pi_t) =
    \begin{pmatrix}[rrrr]
        -1 & 0 & 0 & 0 \\
        -1 & 1 & 0 & 0 \\
         0 & 0 & 1 & 0 \\
        -1 & 0 & 0 & 1
    \end{pmatrix}
    \quad \text{and} \quad
    \vartheta_3(\pi_c) =
    \begin{pmatrix}[rrrr]
        -1 & 1 &  0 &  0 \\
        -1 & 1 & -1 &  0 \\
         0 & 1 & -1 & -1 \\
        -1 & 0 &  0 &  0
    \end{pmatrix}.
\end{equation*}
Since $ n = 10 $, we can conclude that each irreducible representation occurs exactly once in the isotypic decomposition \eqref{isodecom} and therefore the eigenvalues of the adjacency matrix are $ \lambda_1 = 3 $, $ \lambda_2 = 1 $, and $ \lambda_3 = -2 $ with multiplicities $ 1 $, $ 5 $, and $ 4 $.
\end{itemize}

The irreducible representations were computed with the discrete algebra tool GAP~\cite{GAP15}. Note that the matrices are not orthogonal. Using \eqref{eq:projiso}, we compute the projections $ P_1 = \frac{1}{10} \mathbf{1} \mathbf{1}^T $,
\begin{equation*}
    P_2 = \frac{1}{6}
    \begin{pmatrix}[rrrrrrrrrr]
        3 &  1 & -1 & -1 &  1 &  1 & -1 & -1 & -1 & -1 \\
        1 &  3 &  1 & -1 & -1 & -1 &  1 & -1 & -1 & -1 \\
       -1 &  1 &  3 &  1 & -1 & -1 & -1 &  1 & -1 & -1 \\
       -1 & -1 &  1 &  3 &  1 & -1 & -1 & -1 &  1 & -1 \\
        1 & -1 & -1 &  1 &  3 & -1 & -1 & -1 & -1 &  1 \\
        1 & -1 & -1 & -1 & -1 &  3 & -1 &  1 &  1 & -1 \\
       -1 &  1 & -1 & -1 & -1 & -1 &  3 & -1 &  1 &  1 \\
       -1 & -1 &  1 & -1 & -1 &  1 & -1 &  3 & -1 &  1 \\
       -1 & -1 & -1 &  1 & -1 &  1 &  1 & -1 &  3 & -1 \\
       -1 & -1 & -1 & -1 &  1 & -1 &  1 &  1 & -1 &  3
    \end{pmatrix},
\end{equation*}
\begin{equation*}
    P_3 = \frac{1}{15}
    \begin{pmatrix}[rrrrrrrrrr]
        6 & -4 &  1 &  1 & -4 & -4 &  1 &  1 &  1 &  1 \\
       -4 &  6 & -4 &  1 &  1 &  1 & -4 &  1 &  1 &  1 \\
        1 & -4 &  6 & -4 &  1 &  1 &  1 & -4 &  1 &  1 \\
        1 &  1 & -4 &  6 & -4 &  1 &  1 &  1 & -4 &  1 \\
       -4 &  1 &  1 & -4 &  6 &  1 &  1 &  1 &  1 & -4 \\
       -4 &  1 &  1 &  1 &  1 &  6 &  1 & -4 & -4 &  1 \\
        1 & -4 &  1 &  1 &  1 &  1 &  6 &  1 & -4 & -4 \\
        1 &  1 & -4 &  1 &  1 & -4 &  1 &  6 &  1 & -4 \\
        1 &  1 &  1 & -4 &  1 & -4 & -4 &  1 &  6 &  1 \\
        1 &  1 &  1 &  1 & -4 &  1 & -4 & -4 &  1 &  6
    \end{pmatrix}.
    \tag*{\exampleSymbol}
\end{equation*}
\end{example}

We have already seen that equivariance properties lead to the existence of multiple eigenvalues in a generic way (Proposition~\ref{prop:eigenstructure}; cf.~\cite{GSS88, CM14}). For control purposes, it is important to analyze and understand this phenomenon in more detail. Here corresponding results by F\"assler and Stiefel \cite{FS92} and by Gatermann \cite{Ga96} are particularly useful. They have shown that the block diagonal structure induced by the isotypic components can even be refined further by introducing so-called \emph{symmetry adapted bases}.

Let us explain this result in detail. Let $ \vartheta $ be an irreducible representation of $\Gamma$. Then we define the following $ n_\vartheta $ projections by
\begin{equation} \label{eq:projsa}
    P^\mu_\vartheta x = \frac{n_\vartheta}{\abs{\Gamma}} \sum_{\gamma \in \Gamma}
    \left( \vartheta (\gamma^{-1})\right)_{\mu,\mu} \gamma x,
    \quad \mu = 1,\ldots,n_\vartheta.
\end{equation}
Observe that, if the corresponding unitary representations are used, all the projections defined above are orthogonal due to the fact that $ \Gamma \subset {\bf O}(n) $.

\begin{theorem} \label{thm:GW}
Let us denote the image of $ P^\mu_\vartheta $ by $ V^\mu_\vartheta $, $ \mu = 1, \ldots, n_\vartheta $. Then $ V^\mu_\vartheta $ is an $A$-invariant subspace of $ W_\vartheta $ for every $ \mu $ and, moreover, the restrictions $ A_{\vert  V^\mu_\vartheta} $ are all isomorphic for $ \mu = 1, \ldots, n_\vartheta $.
\end{theorem}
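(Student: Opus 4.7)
The plan is to combine the equivariance condition~\eqref{eq:Aeq} with Schur's orthogonality relations for the matrix coefficients of $\vartheta$, which are implicit in the definition~\eqref{eq:projsa}. There are three items to verify: the inclusion $V^\mu_\vartheta \subset W_\vartheta$, the $A$-invariance of $V^\mu_\vartheta$, and the isomorphism of the restrictions $A|_{V^\mu_\vartheta}$ for different values of $\mu$. The first two will follow quickly from the definitions; the third I will obtain by introducing the natural off-diagonal analogues of the projections $P^\mu_\vartheta$ and showing that these serve as invertible intertwiners between $V^\nu_\vartheta$ and $V^\mu_\vartheta$.

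First I would observe that, since every $\gamma \in \Gamma$ commutes with $A$, one can pull $A$ out of the finite sum defining $P^\mu_\vartheta$ in~\eqref{eq:projsa}, giving $A P^\mu_\vartheta = P^\mu_\vartheta A$. Hence, for $v = P^\mu_\vartheta v \in V^\mu_\vartheta$ we have $P^\mu_\vartheta(Av) = A(P^\mu_\vartheta v) = Av$, so $Av \in V^\mu_\vartheta$. The inclusion $V^\mu_\vartheta \subset W_\vartheta$ would follow from summing~\eqref{eq:projsa} over $\mu$ and comparing with~\eqref{eq:projiso}: the identity $\sum_\mu (\vartheta(\gamma^{-1}))_{\mu,\mu} = \chi_\vartheta(\gamma^{-1}) = \overline{\chi_\vartheta(\gamma)}$ (for unitary $\vartheta$) together with the Schur-type identity $P^\mu_\vartheta P^\nu_\vartheta = \delta_{\mu\nu} P^\mu_\vartheta$ gives $\sum_\mu P^\mu_\vartheta = P_\vartheta$ and therefore $P_\vartheta P^\mu_\vartheta = P^\mu_\vartheta$, which is exactly the inclusion.

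For the isomorphism of the restrictions I would introduce the transfer operators
\begin{equation*}
    Q^{\mu,\nu}_\vartheta x = \frac{n_\vartheta}{\abs{\Gamma}} \sum_{\gamma \in \Gamma} (\vartheta(\gamma^{-1}))_{\nu,\mu}\, \gamma x,
\end{equation*}
which reduce to $P^\mu_\vartheta$ when $\mu = \nu$. A direct calculation based on the Schur relation $\sum_\delta (\vartheta(\delta))_{ab} (\vartheta(\delta^{-1}))_{cd} = \frac{\abs{\Gamma}}{n_\vartheta}\delta_{ad}\delta_{bc}$ together with the substitution $\eta = \gamma\delta$ produces the identities
\begin{equation*}
    Q^{\mu,\nu}_\vartheta Q^{\nu,\mu}_\vartheta = P^\mu_\vartheta, \qquad P^\mu_\vartheta Q^{\mu,\nu}_\vartheta = Q^{\mu,\nu}_\vartheta = Q^{\mu,\nu}_\vartheta P^\nu_\vartheta.
\end{equation*}
Consequently $Q^{\mu,\nu}_\vartheta$ restricts to a linear bijection $V^\nu_\vartheta \to V^\mu_\vartheta$ with inverse $Q^{\nu,\mu}_\vartheta$. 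The same commutation argument as for $P^\mu_\vartheta$ also yields $A Q^{\mu,\nu}_\vartheta = Q^{\mu,\nu}_\vartheta A$, so that $Q^{\mu,\nu}_\vartheta \circ A|_{V^\nu_\vartheta} = A|_{V^\mu_\vartheta} \circ Q^{\mu,\nu}_\vartheta$. This exhibits $A|_{V^\mu_\vartheta}$ and $A|_{V^\nu_\vartheta}$ as similar, hence isomorphic, linear maps.

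The main obstacle I anticipate is the careful book-keeping of index conventions in the Schur orthogonality relations: depending on whether one writes $\vartheta(\gamma)$ or $\vartheta(\gamma^{-1})$ and which of the two matrix indices is placed in which slot of the coefficient, the pairing in the key identity $Q^{\mu,\nu}_\vartheta Q^{\nu,\mu}_\vartheta = P^\mu_\vartheta$ flips, so that the transfer operator might a priori map $V^\mu_\vartheta$ into $V^\nu_\vartheta$ rather than in the direction we want. Once the convention is fixed so that~\eqref{eq:projsa} genuinely defines an orthogonal projection onto a subspace of $W_\vartheta$ — which is implicit in the unitarity remark immediately following~\eqref{eq:projsa} — the remaining manipulations are mechanical.
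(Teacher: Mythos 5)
Your argument is correct, but note that the paper itself offers no proof of Theorem~\ref{thm:GW}: it is quoted as a known result of F\"assler--Stiefel and Gatermann, so there is nothing internal to compare against. What you have written is essentially the standard textbook derivation (it is the content of Serre's proposition on the operators $p_{\alpha\beta}$, and of the symmetry-adapted-basis construction in \cite{FS92}): $A$-invariance of $V^\mu_\vartheta$ from $A P^\mu_\vartheta = P^\mu_\vartheta A$, the inclusion $V^\mu_\vartheta\subset W_\vartheta$ from $\sum_\mu P^\mu_\vartheta = P_\vartheta$, and similarity of the restrictions via the off-diagonal intertwiners $Q^{\mu,\nu}_\vartheta$ satisfying $Q^{\mu,\nu}_\vartheta Q^{\nu',\mu'}_\vartheta = \delta_{\nu\nu'}Q^{\mu,\mu'}_\vartheta$; I have checked the index bookkeeping in your convention and the composition rule comes out as you claim. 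The one point you should make explicit rather than leave implicit is that the Schur orthogonality relation you invoke, $\sum_\delta (\vartheta(\delta))_{ab}(\vartheta(\delta^{-1}))_{cd} = \tfrac{\abs{\Gamma}}{n_\vartheta}\delta_{ad}\delta_{bc}$, holds in this clean form only for \emph{absolutely} irreducible representations (equivalently, after complexification or for unitary complex irreducibles); for a real-irreducible but not absolutely irreducible $\vartheta$ (such as $\widetilde{\vartheta}_3$ of $\mathbb{Z}_4$ in the paper) the commutant is larger than the scalars and the relation acquires an extra factor, so the decomposition into $n_\vartheta$ isomorphic pieces must be interpreted over $\C$ or adjusted. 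Since the paper's subsequent control-theoretic results explicitly assume absolute irreducibility and it remarks that unitary representations are used to make the $P^\mu_\vartheta$ orthogonal, your proof covers exactly the case the paper needs.
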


An immediate consequence of Theorem~\ref{thm:GW} is the following: Suppose that we make a change of coordinates and write $ A $ with respect to the symmetry adapted basis. Let $\vartheta$ be an absolutely irreducible representation of dimension $n_\vartheta$. Then there will be $ n_\vartheta $ identical blocks of dimension $\mbox{dim}(W_\vartheta) / n_\vartheta$ on the diagonal of $ A $, and generically these blocks will possess only simple eigenvalues.

\begin{example}
Let us consider again the system defined in Example~\ref{ex:guide} and compute the symmetry adapted basis for the action of $ \Gamma $, see~\eqref{eq:projsa}. The first two projections remain the same, that is $ P^1_1 = P_1 $ and $ P^1_2 = P_2 $, and for $ \vartheta_3 $ we compute
\begin{equation*}
    P_3^1 = \frac{1}{2} 
    \begin{pmatrix}
        0 & 0 & 0 & 0 \\
        0 & I_d & 0 & -I_d \\
        0 & 0 & 0 & 0 \\
        0 & -I_d & 0 & I_d
    \end{pmatrix}
    \quad \text{and} \quad 
    P_3^2 = \frac{1}{2} 
    \begin{pmatrix}
        I_d & 0 & -I_d & 0 \\
        0 & 0 & 0 & 0 \\
        -I_d & 0 & I_d & 0 \\
        0 & 0 & 0 & 0
    \end{pmatrix}.
\end{equation*}
With this information, we define the symmetry adapted basis
\begin{equation*}
    \underbrace{
    \begin{pmatrix}[r]
        1 \\ 0 \\ 1 \\ 0 \\ 1 \\ 0 \\ 1 \\ 0
    \end{pmatrix},
    \begin{pmatrix}[r]
        0 \\ 1 \\ 0 \\ 1 \\ 0 \\ 1 \\ 0\\ 1
    \end{pmatrix}}_{W_1},
    \underbrace{
    \begin{pmatrix}[r]
        1 \\ 0 \\ -1 \\ 0 \\ 1 \\ 0 \\ -1 \\ 0
    \end{pmatrix},
    \begin{pmatrix}[r]
        0 \\ 1 \\ 0 \\ -1 \\ 0 \\ 1 \\ 0 \\ -1
    \end{pmatrix}}_{W_2},
    \underbrace{
    \begin{pmatrix}[r]
        0 \\ 0 \\ 1 \\ 0 \\ 0 \\ 0 \\ -1 \\ 0 
    \end{pmatrix},
    \begin{pmatrix}[r]
        0 \\ 0 \\ 0 \\ 1 \\ 0 \\ 0 \\ 0 \\ -1
    \end{pmatrix},
    \begin{pmatrix}[r]
        1 \\ 0 \\ 0 \\ 0 \\ -1 \\ 0 \\ 0 \\ 0
    \end{pmatrix},
    \begin{pmatrix}[r]
        0 \\ 1 \\ 0 \\ 0 \\ 0 \\ -1 \\ 0 \\ 0 
    \end{pmatrix}}_{W_3}
\end{equation*}
and obtain the following block diagonal structure for $ A $ by a corresponding coordinate transformation:
\begin{equation*}
    \begin{pmatrix}
        22  & -4 & \omit \vrule &  0 &   0 & \omit        &  0 &   0 & \omit        &  0 &   0 \\
         5  & 40 & \omit \vrule &  0 &   0 & \omit        &  0 &   0 & \omit        &  0 &   0 \\ \cline{1-5}
         0  &  0 & \omit \vrule & -2 & -16 & \omit \vrule &  0 &   0 & \omit        &  0 &   0 \\
         0  &  0 & \omit \vrule &  1 &  20 & \omit \vrule &  0 &   0 & \omit        &  0 &   0 \\ \cline{3-9}
         0  &  0 & \omit        &  0 &   0 & \omit \vrule & 10 & -10 & \omit \vrule &  0 &   0 \\
         0  &  0 & \omit        &  0 &   0 & \omit \vrule &  3 &  30 & \omit \vrule &  0 &   0 \\ \cline{7-11}
         0  &  0 & \omit        &  0 &   0 & \omit        &  0 &   0 & \omit \vrule & 10 & -10 \\
         0  &  0 & \omit        &  0 &   0 & \omit        &  0 &   0 & \omit \vrule &  3 &  30
    \end{pmatrix}.
    \tag*{\exampleSymbol}
\end{equation*}
\end{example}

\begin{example}
For the Petersen graph example, refining these projections with the aid of \eqref{eq:projsa} leads to projections $ P_1^1 $, $ P_2^1, \dots, P_2^5 $, and $ P_3^1, \dots, P_3^4 $. Here, in order to obtain an orthonormal basis of $ \R^{10} $, we used the corresponding unitary representations. (Note that due to Weyl's unitary trick finite-dimensional representations of finite groups are unitarisable.) The subspaces $ V_{\vartheta_i}^\mu $ are spanned by the column vectors of the matrix
\begin{equation*}
    T =
    \begin{pmatrix}[rrrrrrrrrr]
        1/\sqrt{10} &   1/\sqrt{18} &   1/3 &  2/\sqrt{60} &  4/\sqrt{60} &     0 &            0 &   2/\sqrt{18} &   1/3 &   2/\sqrt{60} \\
        1/\sqrt{10} &   1/\sqrt{18} &   1/3 & -2/\sqrt{60} &  1/\sqrt{60} &   1/2 &            0 &  -2/\sqrt{18} &  -1/3 &   2/\sqrt{60} \\
        1/\sqrt{10} &  -1/\sqrt{18} &  -1/3 & -2/\sqrt{60} &  1/\sqrt{60} &   1/2 &   1/\sqrt{6} &   1/\sqrt{18} &   1/6 &  -3/\sqrt{60} \\
        1/\sqrt{10} &  -3/\sqrt{18} &     0 &            0 &            0 &     0 &  -1/\sqrt{6} &  -1/\sqrt{18} &   1/3 &   2/\sqrt{60} \\
        1/\sqrt{10} &  -1/\sqrt{18} &   1/6 &  5/\sqrt{60} &            0 &     0 &            0 &             0 &  -1/2 &  -3/\sqrt{60} \\
        1/\sqrt{10} &   1/\sqrt{18} &  -1/6 & -1/\sqrt{60} &  3/\sqrt{60} &  -1/2 &            0 &  -2/\sqrt{18} &   1/6 &  -3/\sqrt{60} \\
        1/\sqrt{10} &   1/\sqrt{18} &   1/3 & -2/\sqrt{60} & -4/\sqrt{60} &     0 &  -1/\sqrt{6} &   1/\sqrt{18} &   1/6 &  -3/\sqrt{60} \\
        1/\sqrt{10} &   1/\sqrt{18} &  -2/3 &            0 &            0 &     0 &  -1/\sqrt{6} &   1/\sqrt{18} &  -1/3 &   2/\sqrt{60} \\
        1/\sqrt{10} &  -1/\sqrt{18} &   1/6 & -3/\sqrt{60} & -1/\sqrt{60} &  -1/2 &   1/\sqrt{6} &   1/\sqrt{18} &  -1/3 &   2/\sqrt{60} \\
        1/\sqrt{10} &   1/\sqrt{18} &  -1/6 &  3/\sqrt{60} & -4/\sqrt{60} &     0 &   1/\sqrt{6} &  -1/\sqrt{18} &   1/3 &   2/\sqrt{60}
    \end{pmatrix}.
\end{equation*}
The matrix $ T $ transforms the adjacency matrix into a diagonal matrix whose entries are the eigenvalues of $ \mathcal{P} $. \exampleSymbol
\end{example}

\section{Controllability and Observability in Equivariant Systems}
\label{sec:Control Theory}

In this section, we will consider systems of the form
\begin{equation} \label{eq:LinearLTI}
    \begin{split}
        \dot{x} &= A \, x + B \, u, \\
        y       &= C \, x + D \, u,
    \end{split}
\end{equation}
where $ x \in \R^n $ is the state vector, $ u \in \R^p $ the control vector, and $ y \in \R^q $ the output vector. Furthermore, $ A \in \R^{n, n} $, $ B \in \R^{n, p} $, $ C \in \R^{q, n} $, and $ D \in \R^{q, p} $. The matrix $ D $ is often not required and can assumed to be zero. We will introduce only the basic definitions required for our analysis, for a more detailed description see for example~\cite{TSH01}.

Controllability and observability of such systems play an important role in modern control system theory. Controllability can be defined as follows: If for any given state $ x_0 $ at time $ t_0 $, there exist a finite time $ t_1 $ and a control function $ u(t) $ that transfers the state $ x(t_0) = x_0 $ to any state $ x(t_1) = x_1 $, then the system is said to be completely controllable. Similarly, a system is defined to be completely observable if for any time $ t_0 $ the initial state $ x(t_0) $ can be determined by observing the system output $ y(t) $ over a finite time interval $ t_0 < t < t_1 $.

\begin{definition}
Define
\begin{equation}
    P =
    \begin{pmatrix}
        B, & A B, & A^2 B, & \cdots, & A^{n-1} B
    \end{pmatrix}
\end{equation}
to be the controllability matrix and
\begin{equation}
    Q =
    \begin{pmatrix}
        C \\ C A \\ C A^2 \\ \vdots \\ C A^{n-1}
    \end{pmatrix}
\end{equation}
the observability matrix.
\end{definition}

It is well known that a linear time-invariant system of form~\eqref{eq:LinearLTI} is completely controllable if and only if the matrix $ P $ has full rank, i.e.~$ \rank(P) = n $, and completely observable if and only if $ Q $ has full rank, i.e.~$ \rank(Q) = n $. These rank conditions are typically used to determine whether a given system is controllable and observable. Another way to test for controllability and observability is to compute the rank of the matrices
\begin{equation}
    \widetilde{P} =
    \begin{pmatrix}
        s I - A, & B
    \end{pmatrix}
    \quad \text{and} \quad
    \widetilde{Q} =
    \begin{pmatrix}
        s I - A \\
        C
    \end{pmatrix}.
\end{equation}
System~\eqref{eq:LinearLTI} is completely controllable if and only if $ \rank(\widetilde{P}) = n $ and completely observable if and only if $ \rank(\widetilde{Q}) = n $ for all $ s \in \mathbb{C} $. If one of these two equivalent controllability conditions is satisfied, then we call the matrix pair $ (A, B) $ \emph{controllable}. Analogously, we call  $ (A, C) $ \emph{observable} if one of the equivalent observability conditions is satisfied.

Our considerations will be based on the following well known result (see e.g.~\cite{So90} or also \cite{CM14}): 

\begin{lemma} \label{lem:eigenvalue}
If $ (A, B) $ is controllable and $ \rank(B) = q $, then the geometric multiplicity of each eigenvalue of $ A $ is at most $ q $.
\end{lemma}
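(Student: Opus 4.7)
The plan is to use the Popov--Belevitch--Hautus (PBH) rank test for controllability, which is already stated in the excerpt: $(A,B)$ is controllable if and only if $\rank\bigl(s I - A,\, B\bigr) = n$ for every $s \in \mathbb{C}$. This converts the controllability assumption into an easy-to-manipulate rank identity evaluated at eigenvalues of $A$.

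First I would fix an arbitrary eigenvalue $\lambda$ of $A$ and let $m$ denote its geometric multiplicity, so that $\dim \ker(\lambda I - A) = m$, and consequently $\rank(\lambda I - A) = n - m$ by the rank--nullity theorem. Then I would apply the PBH test at $s = \lambda$, which yields $\rank\bigl(\lambda I - A,\, B\bigr) = n$. The key inequality is the subadditivity of rank under column concatenation: for any two matrices $M_1, M_2$ with the same number of rows,
\begin{equation*}
    \rank\bigl(M_1,\, M_2\bigr) \le \rank(M_1) + \rank(M_2).
\end{equation*}
Applying this with $M_1 = \lambda I - A$ and $M_2 = B$, and using $\rank(B) = q$ by hypothesis, gives
\begin{equation*}
    n = \rank\bigl(\lambda I - A,\, B\bigr) \le (n - m) + q,
\end{equation*}
which rearranges to $m \le q$, as claimed.

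This is essentially a one-line consequence of the PBH criterion, so there is no real obstacle. The only conceptual point worth flagging is the necessity of invoking the PBH form of controllability (the second test in the excerpt) rather than the Kalman rank form; trying to extract the bound directly from $\rank(B, AB, \ldots, A^{n-1}B) = n$ would be more awkward since it does not localize naturally at a single eigenvalue. An equivalent alternative route, which I would mention only if a dual perspective were useful later, is to argue via left eigenvectors: any basis $w_1, \ldots, w_m$ of the left eigenspace of $A$ at $\lambda$ must satisfy that the vectors $w_j^* B$ are linearly independent (otherwise some nonzero left eigenvector $w$ would satisfy $w^*(\lambda I - A, B) = 0$, contradicting PBH), and hence $m \le \rank(B) = q$. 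Both arguments are essentially the same computation phrased on row versus column space.
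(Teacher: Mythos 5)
Your proof is correct. Note that the paper does not actually prove Lemma~\ref{lem:eigenvalue}; it states the result as well known and refers to the literature (Sontag's book and the paper of Chapman and Mesbahi), so there is no in-paper argument to compare against. Your PBH computation is the standard proof one would find in those references: at an eigenvalue $\lambda$ of geometric multiplicity $m$ one has $\rank(\lambda I - A) = n - m$, and subadditivity of rank under column concatenation together with $\rank\bigl(\lambda I - A,\, B\bigr) = n$ forces $n \le (n-m) + q$, i.e.\ $m \le q$. The left-eigenvector variant you sketch is likewise sound and is the form in which the result is often quoted in the network-controllability literature, since it identifies \emph{which} directions $B$ must reach; either version fully establishes the lemma, including for complex eigenvalues, since the PBH condition in the paper is stated for all $s \in \C$ and the rank of the real matrix $B$ is the same over $\R$ and $\C$.
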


We now assume that $ A $ is $ \Gamma $-equivariant (see \eqref{eq:Aeq}). Let $ \vartheta_1, \dots, \vartheta_K $ be the irreducible representations of $ \Gamma $, denote by $ n_i $ their respective dimensions $ (i = 1, \ldots, K) $, and by $ P_{\vartheta_i} $ the projection onto the corresponding isotypic component (see \eqref{eq:projiso}). For simplicity, we assume that these representations are absolutely irreducible. The case where $ \Gamma $ possesses irreducible representations which are not absolutely irreducible can be treated in an analogous way.

By a combination of Lemma~\ref{lem:eigenvalue} with Theorem \ref{thm:GW} we obtain our first result.

\begin{proposition} \label{prop:rank}
Suppose that $(A, B)$ is controllable with $ \rank(B) = q $. Then
\begin{equation}
    q \ge N_\Gamma,
\end{equation}
where
\begin{equation}\label{eq:NGamma}
    N_\Gamma = \max_{i=1,\ldots,K}\{ n_i : P_{\vartheta_i} \neq 0\}.
\end{equation}
\end{proposition}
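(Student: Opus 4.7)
The plan is to combine Theorem~\ref{thm:GW} with Lemma~\ref{lem:eigenvalue}: for each irreducible representation $\vartheta_i$ with $P_{\vartheta_i}\neq 0$, I will exhibit an eigenvalue of $A$ whose geometric multiplicity is at least $n_i$, so that the lemma forces $q\geq n_i$; maximizing over $i$ then gives $q\geq N_\Gamma$.

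To produce such an eigenvalue, fix $i$ with $P_{\vartheta_i}\neq 0$, so $W_{\vartheta_i}\neq\{0\}$. Schur orthogonality applied to the formula~\eqref{eq:projsa} shows that $P^1_{\vartheta_i},\ldots,P^{n_i}_{\vartheta_i}$ are mutually orthogonal idempotents summing to $P_{\vartheta_i}$, whence
\begin{equation*}
    W_{\vartheta_i} = V^1_{\vartheta_i} \oplus \cdots \oplus V^{n_i}_{\vartheta_i},
\end{equation*}
and in particular every $V^\mu_{\vartheta_i}$ has positive dimension $\dim(W_{\vartheta_i})/n_i$. Theorem~\ref{thm:GW} tells us that each of these subspaces is $A$-invariant and that the restrictions $A_{\vert V^\mu_{\vartheta_i}}$ are pairwise isomorphic. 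Choosing any (possibly complex) eigenvalue $\lambda$ of the common restriction and transporting an eigenvector through the isomorphisms yields an eigenvector for $\lambda$ inside each $V^\mu_{\vartheta_i}$. Because the $V^\mu_{\vartheta_i}$ are linearly independent subspaces, the resulting $n_i$ eigenvectors are linearly independent as well, and so the geometric multiplicity of $\lambda$ as an eigenvalue of $A$ is at least $n_i$.

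The conclusion is now immediate: Lemma~\ref{lem:eigenvalue} (applied via the Popov--Belevitch--Hautus rank test, which handles complex eigenvalues uniformly) gives $q\geq n_i$, and taking the maximum over all $i$ with $P_{\vartheta_i}\neq 0$ yields $q\geq N_\Gamma$. The main subtlety is that Theorem~\ref{thm:GW} as stated does not explicitly spell out that the $V^\mu_{\vartheta_i}$ exhaust $W_{\vartheta_i}$ or that they are nontrivial; both facts come from the identity $\sum_\mu P^\mu_{\vartheta_i}=P_{\vartheta_i}$, which is the one piece of representation-theoretic bookkeeping that needs to be invoked. With that in hand, the proposition reduces to a short deduction from the two earlier results.
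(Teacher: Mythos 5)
Your proof is correct and follows essentially the same route as the paper: both combine Theorem~\ref{thm:GW} with Lemma~\ref{lem:eigenvalue} by observing that $P_{\vartheta_i}\neq 0$ forces the existence of an eigenvalue of geometric multiplicity at least $n_i$. The paper states this in three sentences; you simply fill in the bookkeeping (the decomposition $W_{\vartheta_i}=\bigoplus_\mu V^\mu_{\vartheta_i}$ into nontrivial pieces and the transport of eigenvectors through the isomorphic restrictions) that the paper leaves implicit.
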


\begin{proof}
Observe that the condition $ P_{\vartheta_i} \neq 0 $ guarantees that the $ i^\text{th} $ irreducible representation of $ \Gamma $ actually occurs nontrivially in the isotypic decomposition \eqref{isodecom}. Therefore, by Theorem~\ref{thm:GW} the matrix $ A $ possesses an eigenvalue of geometric multiplicity at least $ N_\Gamma $. Now the result follows with Lemma~\ref{lem:eigenvalue}. 
\end{proof}
 
\begin{example}
In our guiding example $ N_\Gamma = 2 $, and therefore the rank of $ B $ has to be at least two. For the nonsymmetric case $ C_1 \ne C_2 $, however, $ N_\Gamma = 1 $ and $ \rank(B) \ge 1 $ is sufficient. For the Petersen graph example $ N_\Gamma = 5 $, which requires $\rank(B) \ge 5 $ in order to achieve controllability. \exampleSymbol
\end{example}

From now on, we will assume that $ P_{\vartheta_i} \neq 0 $ for $ i = 1, \ldots, K $. That is, we will restrict our attention to the irreducible representations which actually possess nontrivial isotypic components. We will now refine the result in Proposition~\ref{prop:rank} and address the question how to use the symmetry adapted basis for the identification of appropriate matrices $B$. That is, we would like to identify a control such that both the number of columns of $ B $ and the number of nonzero entries of $ B $ are as small as possible.

Let us denote by $ P^\mu_{\vartheta_i} $, $ i = 1, \ldots, K $, $ \mu = 1, \ldots, n_i $, the projections onto the symmetry adapted basis and by  $ V^\mu_{\vartheta_i} $ the corresponding subspaces (see Theorem~\ref{thm:GW}). Since the projections $ P^\mu_{\vartheta_i} $ are orthogonal, we know that
\begin{equation}
    \R^n = \bigoplus V^\mu_{\vartheta_i}
\end{equation}
is an orthogonal sum of the $ A $-invariant subspaces $ V^\mu_{\vartheta_i} $. We denote by
\begin{equation}\label{eq:Bi}
    {\cal B}^1_1, \ldots, {\cal B}^{n_1}_1, {\cal B}^1_2, \ldots, {\cal B}^{n_2}_2, \ldots, {\cal B}^1_K, \ldots, {\cal B}^{n_K}_K
\end{equation}
a corresponding orthonormal basis of $ \R^n $ where $ {\cal B}^\mu_i $ is an orthonormal basis of $ V^\mu_{\vartheta_i} $ of length $ d_i $. The coordinate transformation induced by this basis is called $ T $.

\begin{example}
Considering Example~\ref{ex:guide}, we see that
\begin{equation*}
    \underbrace{
    \begin{pmatrix}
        1/2 \\ 0 \\ 1/2 \\ 0 \\ 1/2 \\ 0 \\ 1/2 \\ 0
    \end{pmatrix},
    \begin{pmatrix}
        0 \\ 1/2 \\ 0 \\ 1/2 \\ 0 \\ 1/2 \\ 0\\ 1/2
    \end{pmatrix}}_{{\cal B}^1_1},
    \underbrace{
    \begin{pmatrix}
        1/2 \\ 0 \\ -1/2 \\ 0 \\ 1/2 \\ 0 \\ -1/2 \\ 0
    \end{pmatrix},
    \begin{pmatrix}
        0 \\ 1/2 \\ 0 \\ -1/2 \\ 0 \\ 1/2 \\ 0 \\ -1/2
    \end{pmatrix}}_{{\cal B}^1_2},
    \underbrace{
    \begin{pmatrix}
        0 \\ 0 \\ 1/\sqrt{2} \\ 0 \\ 0 \\ 0 \\ -1/\sqrt{2} \\ 0 
    \end{pmatrix},
    \begin{pmatrix}
        0 \\ 0 \\ 0 \\ 1/\sqrt{2} \\ 0 \\ 0 \\ 0 \\ -1/\sqrt{2}
    \end{pmatrix}}_{{\cal B}^1_3},
    \underbrace{
    \begin{pmatrix}
        1/\sqrt{2} \\ 0 \\ 0 \\ 0 \\ -1/\sqrt{2} \\ 0 \\ 0 \\ 0
    \end{pmatrix},
    \begin{pmatrix}
        0 \\ 1/\sqrt{2} \\ 0 \\ 0 \\ 0 \\ -1/\sqrt{2} \\ 0 \\ 0 
    \end{pmatrix}}_{{\cal B}^2_3}
\end{equation*}
yields the orthonormal basis of $ \R^8 $ and thus
\begin{equation}
    T =
    \begin{pmatrix}[cccccccc]
        1/2  &   0  &  1/2 &    0 &           0 &           0 &  1/\sqrt{2} &           0 \\
          0  & 1/2  &    0 &  1/2 &           0 &           0 &           0 &  1/\sqrt{2} \\
        1/2  &   0  & -1/2 &    0 &  1/\sqrt{2} &           0 &           0 &           0 \\
          0  & 1/2  &    0 & -1/2 &           0 &  1/\sqrt{2} &           0 &           0 \\
        1/2  &   0  &  1/2 &    0 &           0 &           0 & -1/\sqrt{2} &           0 \\
          0  & 1/2  &    0 &  1/2 &           0 &           0 &           0 & -1/\sqrt{2} \\
        1/2  &   0  & -1/2 &    0 & -1/\sqrt{2} &           0 &           0 &           0 \\
          0  & 1/2  &    0 & -1/2 &           0 & -1/\sqrt{2} &           0 &           0
    \end{pmatrix}.
    \tag*{\exampleSymbol}
\end{equation}
\end{example}

\begin{proposition}
Suppose that $ (A, B) $ is controllable. Then for each irreducible representation $ \vartheta_i $ of $ \Gamma $ there exist $ n_i $ columns $ b_1, \ldots, b_{n_i} $ of $ B $ such that
\begin{equation}
    P^\mu_{\vartheta_i} b_\mu \neq 0 \text{ for } \mu = 1, \ldots, n_i.
\end{equation}
\end{proposition}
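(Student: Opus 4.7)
The plan is to argue by contradiction, combining Hall's marriage theorem with the Popov--Belevitch--Hautus (PBH) eigenvector test for controllability. The existence claim is precisely a system-of-distinct-representatives (SDR) problem: for each $\mu \in \{1,\ldots,n_i\}$ set $N(\mu) = \{k : P^\mu_{\vartheta_i} b_k \neq 0\} \subseteq \{1,\ldots,p\}$, where $b_k$ denotes the $k$-th column of $B$. By Hall's theorem the desired SDR exists iff $|\bigcup_{\mu \in S} N(\mu)| \geq |S|$ for every $S \subseteq \{1,\ldots,n_i\}$. Suppose, for contradiction, that this fails, so there is some $S$ of size $s$ with $|K| < s$, where $K := \bigcup_{\mu \in S} N(\mu)$.

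From this Hall violation I would build a left eigenvector of $A$ annihilating every column of $B$, which will contradict the PBH criterion. Note first that $A^T$ is also $\Gamma$-equivariant (transpose the relation $\gamma A = A\gamma$ and use $\gamma^T = \gamma^{-1} \in \Gamma$), so each $V^\mu_{\vartheta_i}$, which depends only on $\Gamma$ and $\vartheta_i$, is invariant under both $A$ and $A^T$. By Theorem~\ref{thm:GW} the restrictions $A|_{V^\mu_{\vartheta_i}}$ are pairwise isomorphic and therefore share a common spectrum; pick any eigenvalue $\lambda \in \mathbb{C}$ and, for each $\mu \in S$, a left eigenvector $w_\mu$ of $A|_{V^\mu_{\vartheta_i}}$ for $\lambda$. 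For arbitrary scalars $(c_\mu)_{\mu \in S} \in \mathbb{C}^s$, the combination $v = \sum_{\mu \in S} c_\mu w_\mu$ (extended by zero on the remaining summands of $\mathbb{R}^n = \bigoplus V^\mu_{\vartheta_i}$) is again a left eigenvector of $A$ for $\lambda$, and it is nonzero whenever $(c_\mu) \neq 0$ because the $w_\mu$ lie in pairwise orthogonal subspaces.

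It remains to choose $(c_\mu)$ so that $v^\ast B = 0$. Writing $(b_k)^\mu := P^\mu_{\vartheta_i} b_k$ for the $V^\mu_{\vartheta_i}$-component of $b_k$, orthogonality of the decomposition gives
\begin{equation*}
    v^\ast b_k \;=\; \sum_{\mu \in S} \overline{c_\mu}\, w_\mu^\ast (b_k)^\mu.
\end{equation*}
For $k \notin K$ every summand vanishes by the definition of $K$, so $v^\ast b_k = 0$ automatically. For $k \in K$ the right-hand side is a (conjugate-)linear function of $(c_\mu)$, giving a map $\Phi : \mathbb{C}^s \to \mathbb{C}^{|K|}$. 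Since $|K| < s$, rank--nullity yields a nonzero element $(c_\mu) \in \ker \Phi$, producing the desired nonzero left eigenvector $v$ with $v^\ast B = 0$, which contradicts controllability via the PBH test.

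The conceptual obstacle is to see the statement simultaneously through three lenses: the combinatorial SDR framework of Hall, the spectral PBH condition, and the refined block structure supplied by Theorem~\ref{thm:GW}. The hypothesis of absolute irreducibility enters exactly at the point where one needs $s$-many left eigenvectors for a \emph{common} eigenvalue, one in each $V^\mu_{\vartheta_i}$; this is furnished by the shared spectrum of the isomorphic restrictions. Once these ingredients are aligned, the dimension count $|K| < s$ closes the argument with no further computation.
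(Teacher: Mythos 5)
Your argument is correct, and it takes a genuinely different route from the paper. The paper's proof passes to the symmetry adapted coordinates $\widetilde{A}=T^TAT$, \emph{asserts} as a known necessary condition for controllability that each of the $n_i$ identical diagonal blocks must be ``addressed'' by some column of $\widetilde{B}$, and then translates this back to the original coordinates via the identity $P^\mu_{\vartheta_i}T=S_i^\mu$. You instead prove that necessary condition from scratch: you recast the conclusion as a system-of-distinct-representatives problem, and from a violation of Hall's condition ($|K|<s$) you manufacture an explicit uncontrollable mode --- a nonzero left eigenvector $v$ with $v^\ast B=0$ --- by exploiting that the restrictions $A_{\vert V^\mu_{\vartheta_i}}$ share a common spectrum (Theorem~\ref{thm:GW}), that the $V^\mu_{\vartheta_i}$ are mutually orthogonal and $A$-invariant so a left eigenvector of a restriction extends by zero, and a rank--nullity count on the $|K|$ conditions $v^\ast b_k=0$, $k\in K$. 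Two remarks. First, your approach buys something real: it establishes the proposition with $n_i$ \emph{distinct} columns (the reading consistent with the rank bound of Proposition~\ref{prop:rank}), whereas the paper's argument, read literally, only produces for each $\mu$ \emph{some} column with $P^\mu_{\vartheta_i}b_\mu\neq 0$, possibly the same column for different $\mu$; the Hall machinery is exactly what upgrades this to distinctness. Second, a small point of hygiene: you should note the paper's standing assumption $P_{\vartheta_i}\neq 0$, without which $V^\mu_{\vartheta_i}=\{0\}$, the restriction has no eigenvalue, and the statement itself is vacuous or false; and your parenthetical that absolute irreducibility is what furnishes the common eigenvalue is slightly off --- the shared spectrum comes from the isomorphism of the restrictions in Theorem~\ref{thm:GW}, which is where the hypothesis has already been absorbed. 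Neither point is a gap in the argument.
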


\begin{proof}
Consider the block diagonal matrix
\begin{equation}
    \widetilde{A} = T^{-1} A T = T^T A T.
\end{equation}
A necessary condition for $ (\widetilde{A}, \widetilde{B}) $ to be controllable is that for each irreducible representation $ \vartheta_i $ there are $ n_i $ columns of $ \widetilde{B} $ with nontrivial components addressing all the $ n_i $ identical blocks corresponding to $ \vartheta_i $. More precisely, suppose that these $n_i$ identical blocks correspond to the index sets $I_i^\mu \subset \{ 1,\ldots,n\} $, $\mu = 1,\ldots,n_i$. Then for each $ \mu $ there is a column $ \tilde{b}^\mu $ of $ \widetilde{B} $ such that $ \sum_{j\in I_i^\mu} \vert \tilde b^\mu_j \vert \not = 0 $. Observing that
\begin{equation}
    B = T \widetilde{B},
\end{equation}
we compute for each such column $ \tilde{b}^\mu $ and $ b_\mu = T \tilde{b}^\mu $
\begin{equation}
    P^\mu_{\vartheta_i} b_\mu = P^\mu_{\vartheta_i} T \tilde{b}^\mu = S_i^\mu \tilde{b}^\mu \neq 0.
\end{equation}
Here, $ S_i^\mu $ denotes the matrix with columns $ [ {\bf 0} \cdots  {\bf 0}\, {\cal B}^\mu_i \, {\bf 0}\cdots {\bf 0} ] $.
\end{proof}

\begin{remark}
Consider the generic case, a matrix $ A $ with $ n $ distinct eigenvalues. For such a system, in order to be controllable, the matrix $ B $ must also address all eigenspaces. Assume that $ B = \sum_{i=1}^n \alpha_i v_i $ and that there is a $ j $ with $ \alpha_j = 0 $, then
\begin{equation*}
    \rank\left(\lambda_j I - A, \sum_{i \ne j} \alpha_i v_i \right) < n  
\end{equation*}
since $ v_j \in \mathop{ker}(\lambda_j I - A) $ and $ v_j $ cannot be generated by the other eigenvectors. Thus $ (A, B) $ is not controllable.
\end{remark}

With the following proposition we identify particular simple choices for $ B $.

\begin{proposition} \label{prop:em}
Recall that $d_i$ is the length of the orthonormal basis $ {\cal B}^\mu_i $ of $ V^\mu_{\vartheta_i} $ (see \eqref{eq:Bi}). Consider the $ m^\text{th} $ row of the transformation $ T = (t_{rs}) $ and assume $ t_{mj} \neq 0 $ where
\begin{equation}
    j = \sum_{\ell=1}^{i-1} n_\ell d_\ell + (\mu - 1) d_i + k
\end{equation}
for $\mu\in\{ 1,\ldots,n_i\}$ and a $k\in\{ 1,\ldots,d_i\}$. Then
\begin{equation}
    P^\mu_{\vartheta_i} e_m \neq 0,
\end{equation}
where $ e_m \in\R^n $ is the $ m^\text{th} $ canonical unit vector in $ \R^n $.
\end{proposition}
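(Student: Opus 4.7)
The plan is to unpack the definitions and then read off the result from orthogonality. By the construction leading to~\eqref{eq:Bi}, the columns of $T$ are precisely the orthonormal basis vectors listed there, grouped first by the irreducible representation $\vartheta_i$ and then by the index $\mu$. Writing $j(\mu,k) = \sum_{\ell=1}^{i-1} n_\ell d_\ell + (\mu-1)d_i + k$, the $j(\mu,k)$-th column of $T$ is exactly the $k$-th element of ${\cal B}^\mu_i$, which I will denote $b^\mu_{i,k}$. Since $T$ is orthogonal ($T^{-1}=T^T$), the hypothesis $t_{mj}\ne 0$ is equivalent to $\langle e_m, b^\mu_{i,k}\rangle \ne 0$ for that specific $k$, i.e.\ $e_m$ has a nontrivial component along one particular element of the orthonormal basis spanning $V^\mu_{\vartheta_i}$.

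Next I would invoke the remark made right after~\eqref{eq:projsa}, which states that when the corresponding unitary representations are used each $P^\mu_{\vartheta_i}$ is an orthogonal projection. Its image is $V^\mu_{\vartheta_i} = \operatorname{span}{\cal B}^\mu_i$, so expanding in the orthonormal basis $\{b^\mu_{i,r}\}_{r=1}^{d_i}$ gives
\[
P^\mu_{\vartheta_i} e_m \;=\; \sum_{r=1}^{d_i} \langle e_m, b^\mu_{i,r}\rangle\, b^\mu_{i,r} \;=\; \sum_{r=1}^{d_i} t_{m,\, j(\mu,r)}\, b^\mu_{i,r}.
\]

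From here the conclusion is immediate: the hypothesis supplies a specific $k$ with $t_{m,j(\mu,k)} \ne 0$, so the coefficient of $b^\mu_{i,k}$ in this expansion is nonzero. By linear independence of the orthonormal family $\{b^\mu_{i,r}\}_{r=1}^{d_i}$, the sum cannot vanish, hence $P^\mu_{\vartheta_i} e_m \ne 0$.

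There is no real obstacle here. The only point to be careful about is the bookkeeping that identifies the column of $T$ indexed by $j(\mu,k)$ with the $k$-th element of ${\cal B}^\mu_i$, together with the use of orthogonality of $T$ to rewrite $t_{mj}$ as an inner product; both are direct consequences of the construction preceding~\eqref{eq:Bi} and of the orthonormality of the basis.
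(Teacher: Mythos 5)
Your proof is correct and follows essentially the same route as the paper: the paper writes $e_m = T\,T_m$ (with $T_m$ the $m$th column of $T^T$) and computes $P^\mu_{\vartheta_i} e_m = S_i^\mu T_m$, which is exactly your expansion $\sum_{r} t_{m,\,j(\mu,r)}\, b^\mu_{i,r}$ obtained from the orthogonal-projection formula. Your version merely makes the column-index bookkeeping and the use of orthogonality of $T$ more explicit; no substantive difference.
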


\begin{proof}
The assumption on the index $j$ guarantees that the $m^\text{th}$ column $ T_m $ of $ T^T $ contains a non-vanishing component corresponding to the invariant subspace $ V^\mu_{\vartheta_i} $ in the block diagonal structure of $\widetilde{A} = T^T A T$. Therefore
\begin{equation*}
    P^\mu_{\vartheta_i} e_m = P^\mu_{\vartheta_i} T T_m = S_i^\mu T_m \neq 0.
\end{equation*}
As before, $ S_i^\mu $ denotes the matrix with columns $ [ {\bf 0} \cdots  {\bf 0}\, {\cal B}^\mu_i \, {\bf 0}\cdots {\bf 0} ] $.
\end{proof}

With this proposition, we are now in the position to propose the following algorithm for the construction of the matrix $ B $. Let $ T = (t_{rs}) $ be the coordinate transformation matrix whose column vectors, denoted $ T_s $, form a basis.
\begin{equation} \label{alg:B}
\begin{minipage}{0.8\textwidth} \fontsize{10}{15} \selectfont \begin{algorithmic}[1]
\State Set $ V = \varnothing $, define $ B = B(V) $ to be the matrix whose columns are the unit vectors defined by the indices  $ V $.
\State Choose an absolutely irreducible representation $ \vartheta_i $ of $ \Gamma $ with $ n_i = N_\Gamma $, see~\eqref{eq:NGamma}.
\State Let $ T_s $ be the first column of the matrix $ T $ belonging to $ \vartheta_i $.
\State Find the first nonzero entry $ [T_{s}]_r $ in the vector $ T_s $, i.e.~$ [T_{s}]_r = t_{rs} \ne 0 $, with $ r \notin V $.
\State $ V \leftarrow V \cup \{ r \} $  and $ s \leftarrow s + d_i $.
\State If $ \abs{V} < n_i, $ go to step 4. (Count only indices added for representation $ \vartheta_i $.)
\State If $ \rank(P) < n $ for $ A $ and $ B $, choose the next largest representation $ \vartheta_i $ and go to step 3.
\end{algorithmic} \end{minipage}
\end{equation}

In the worst case, the algorithm would add all $ n $ unit vectors so that $ (A, B) $ is controllable. Instead of adding unit vectors directly to the matrix $ B $, one could check whether adding that vector would increase the rank of $ P $ in order to avoid redundant information in $ B $. Let us now apply this algorithm to the guiding example.

\begin{example}
In this case, Proposition~\ref{prop:em} and the structure of $ T $ imply that
\begin{equation}
    P^1_{\vartheta_3} e_3 \neq 0 \quad \text{and} \quad P^2_{\vartheta_3} e_1 \neq 0. 
\end{equation}
In fact, it can easily be verified that $ (A, B) $ is controllable with the choice
\begin{equation}
    B =
    \begin{pmatrix}[cc]
         1 & 0 \\
         0 & 0 \\
         0 & 1 \\
         0 & 0 \\
         0 & 0 \\
         0 & 0 \\
         0 & 0 \\
         0 & 0 
    \end{pmatrix}.
\end{equation}
Here, Algorithm~\ref{alg:B} would start with column $ 5 $ of $ T $, add $ e_3 $ to $ B $, then select column $ 7 $, and add $ e_1 $. Since the system is controllable with that choice of $ B $, the algorithm stops.
\exampleSymbol
\end{example}

\begin{remark}
Interestingly, for the network with $ \mathbb{Z}_4 $ symmetry, one vector, e.g.~$ B = (1, 0, 0, 0, 0, 0, 0, 0)^T $, suffices to render the system controllable. That is, the system is controllable from any node.
\end{remark}

\begin{example}
Similarly, using Algorithm~\ref{alg:B}, we obtain for the Petersen graph example and the representation $ \vartheta_2 $
\begin{equation*}
    P^1_{\vartheta_2} e_1 \ne 0, \quad
    P^2_{\vartheta_2} e_2 \ne 0, \quad
    P^3_{\vartheta_2} e_3 \ne 0, \quad
    P^4_{\vartheta_2} e_6 \ne 0, \quad
    P^5_{\vartheta_2} e_9 \ne 0.
\end{equation*}
The corresponding matrix
\begin{equation*}
    B =
    \begin{pmatrix}
        1 & 0 & 0 & 0 & 0 \\
        0 & 1 & 0 & 0 & 0 \\
        0 & 0 & 1 & 0 & 0 \\
        0 & 0 & 0 & 0 & 0 \\
        0 & 0 & 0 & 0 & 0 \\
        0 & 0 & 0 & 1 & 0 \\
        0 & 0 & 0 & 0 & 0 \\
        0 & 0 & 0 & 0 & 0 \\
        0 & 0 & 0 & 0 & 1 \\
        0 & 0 & 0 & 0 & 0
    \end{pmatrix}
\end{equation*}
also addresses the other blocks corresponding to $ \vartheta_1 $ and $ \vartheta_3 $. Thus, $ (A, B) $ is controllable. Not all configurations with $ N_\Gamma = 5 $ input vectors are controllable. Figure~\ref{fig:Petersen_cnc} shows different configurations which are controllable (controlled vertices are marked green) and noncontrollable (controlled vertices are marked red). \exampleSymbol

\begin{figure}[htb]
    \begin{minipage}[t]{0.19\textwidth}
        \includegraphics[width=\textwidth]{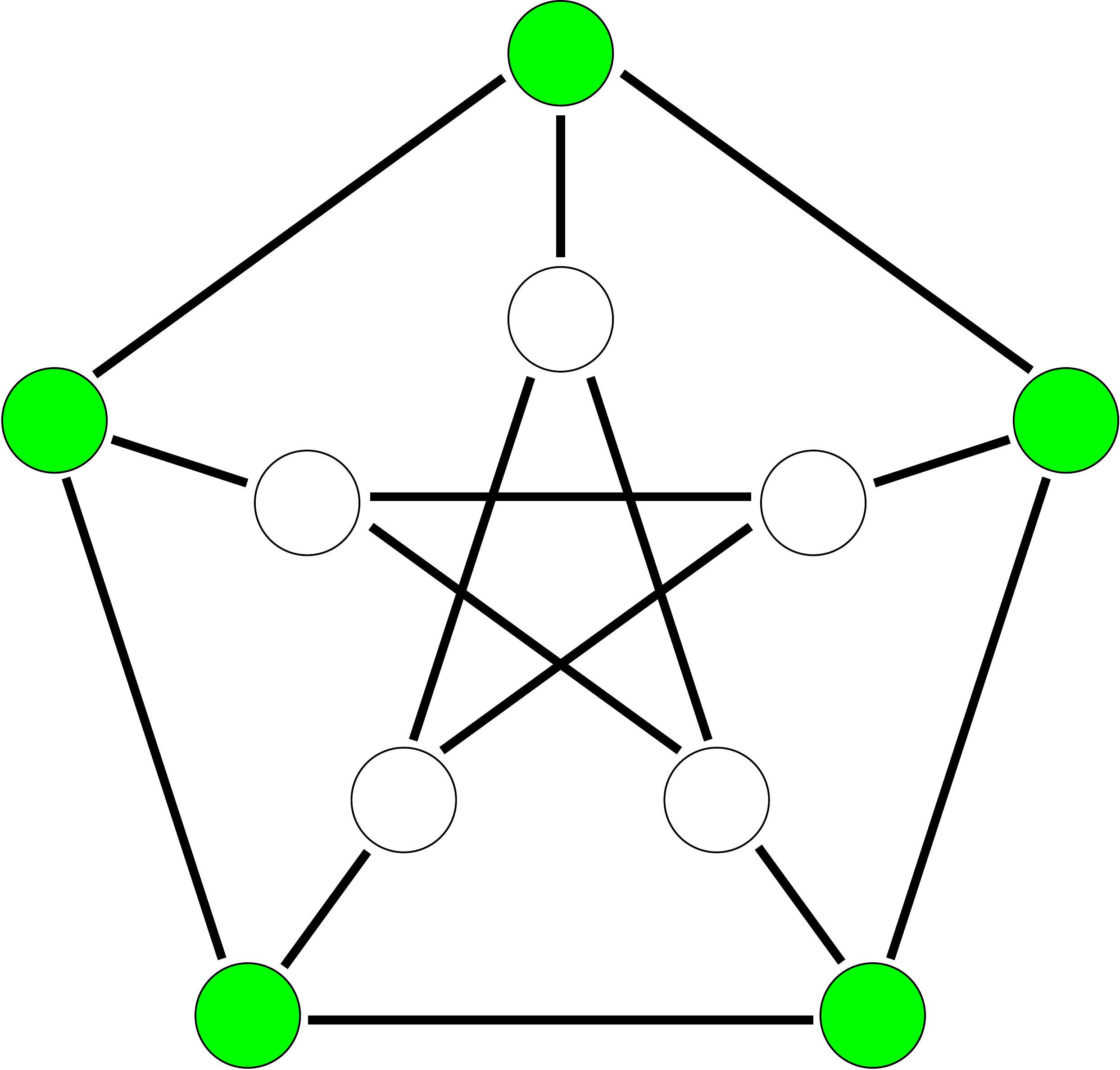}
    \end{minipage}
    \begin{minipage}[t]{0.19\textwidth}
        \includegraphics[width=\textwidth]{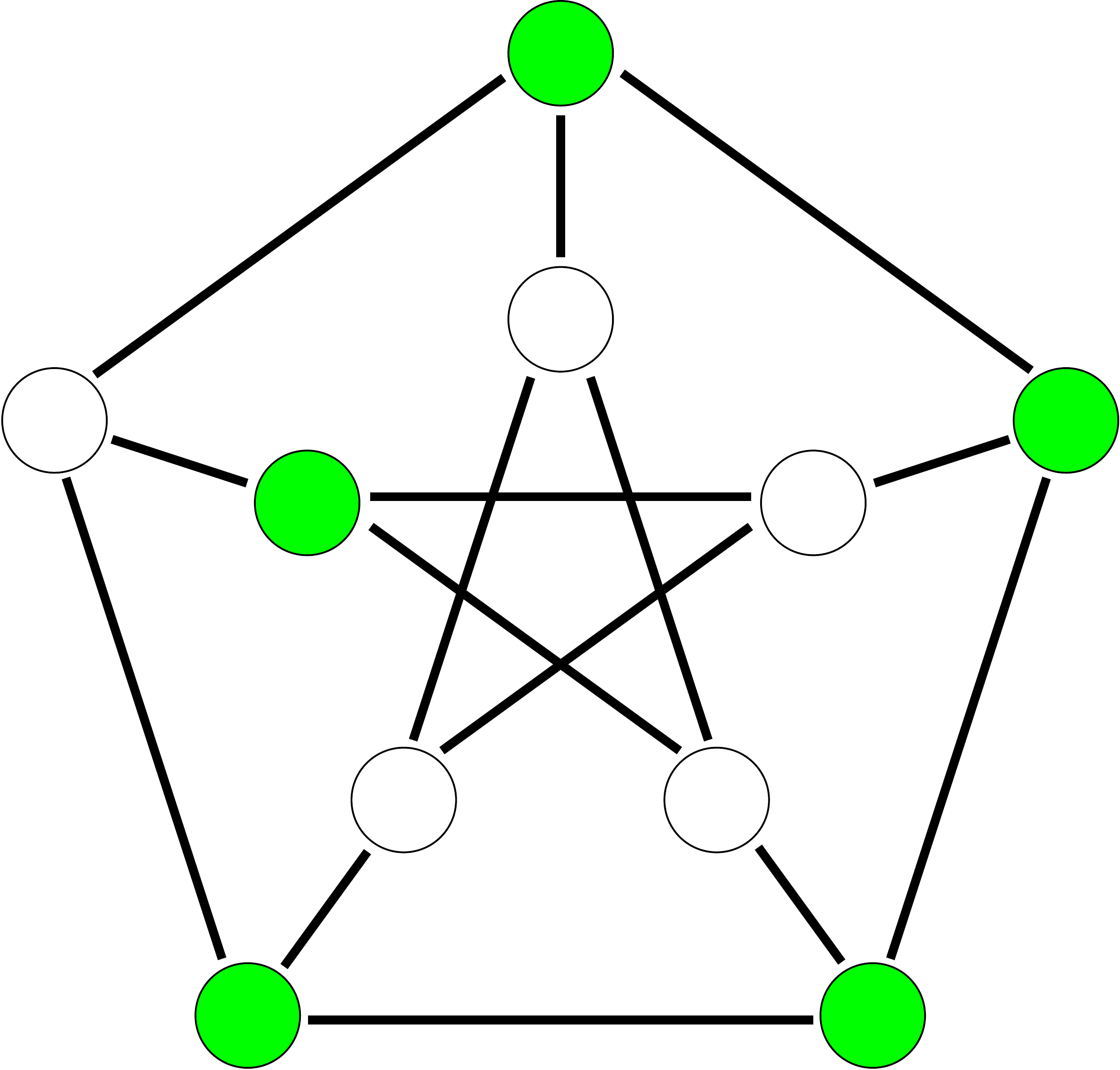}
    \end{minipage}
    \begin{minipage}[t]{0.19\textwidth}
        \includegraphics[width=\textwidth]{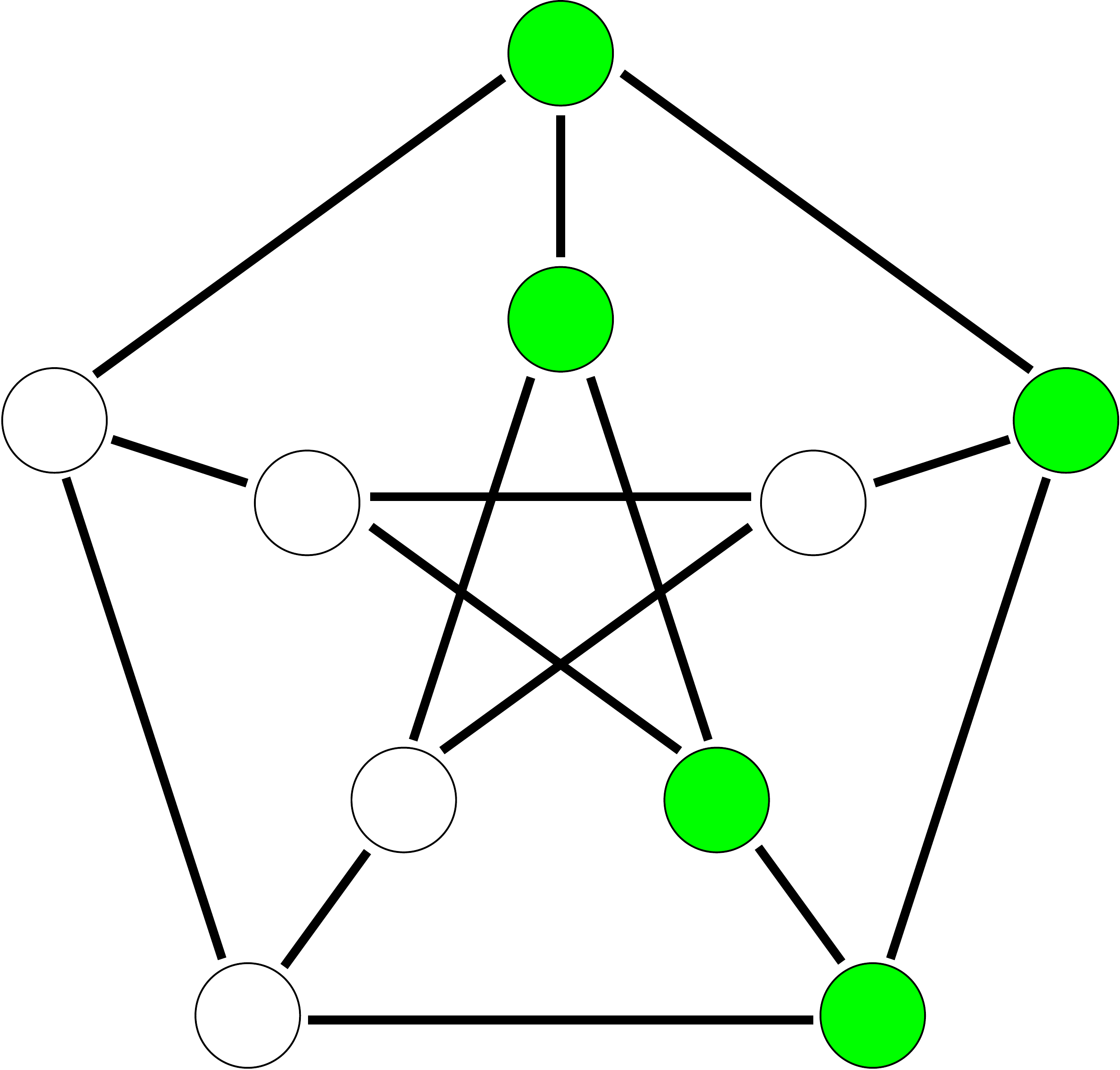}
    \end{minipage}
    \begin{minipage}[t]{0.19\textwidth}
        \includegraphics[width=\textwidth]{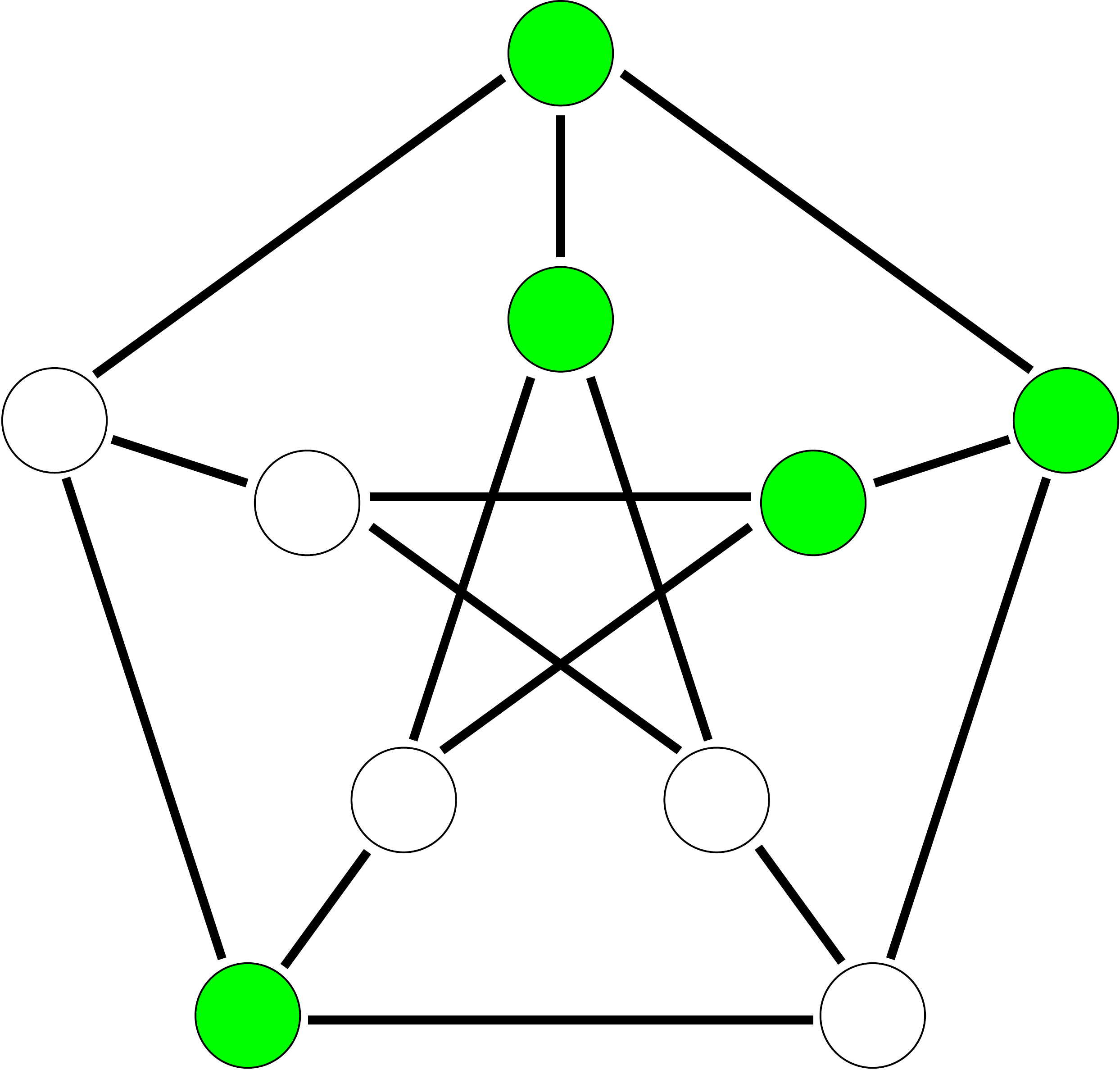}
    \end{minipage}
    \begin{minipage}[t]{0.19\textwidth}
        \includegraphics[width=\textwidth]{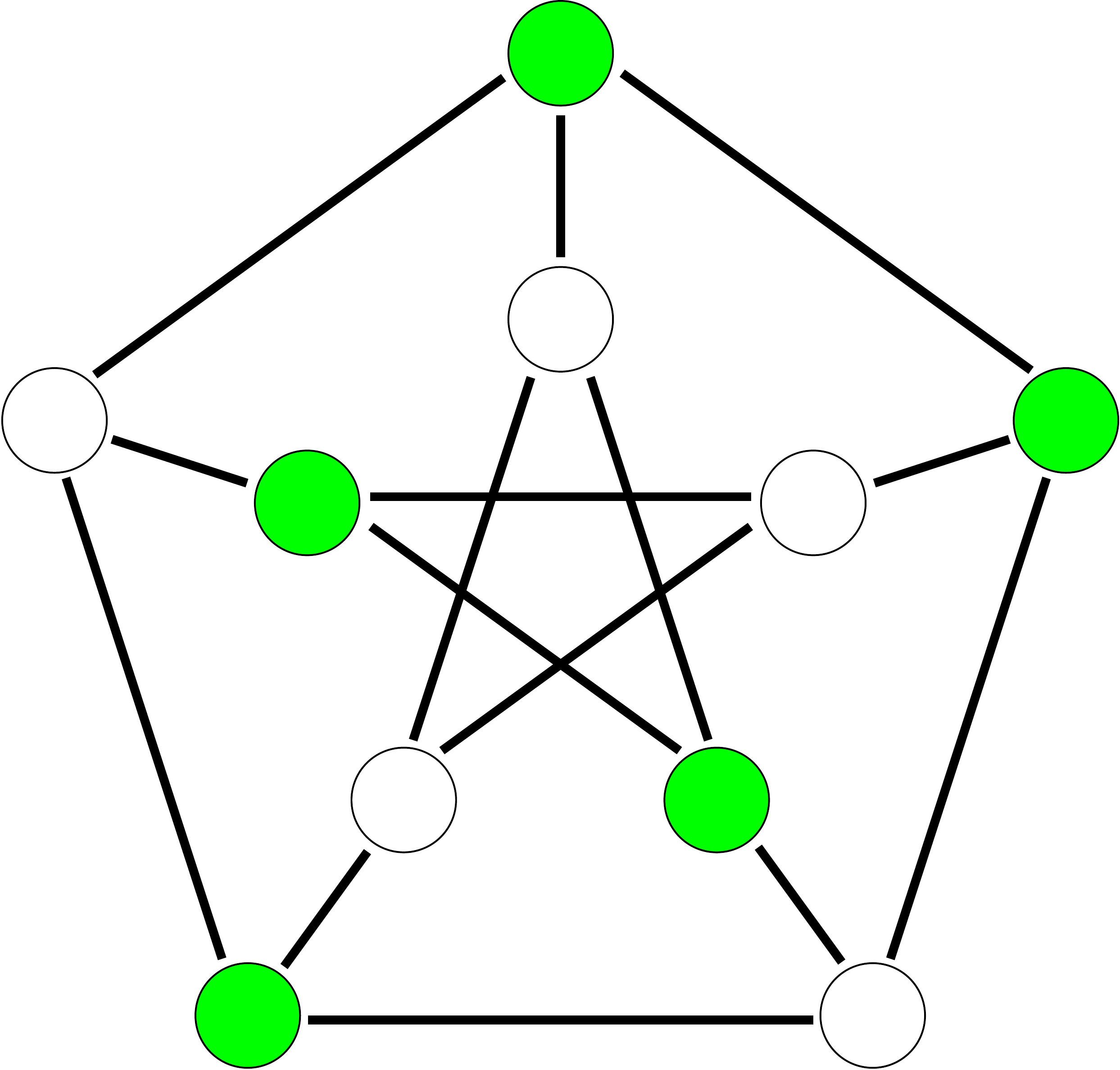}
    \end{minipage} \\[1em]
    \begin{minipage}[t]{0.19\textwidth}
        \includegraphics[width=\textwidth]{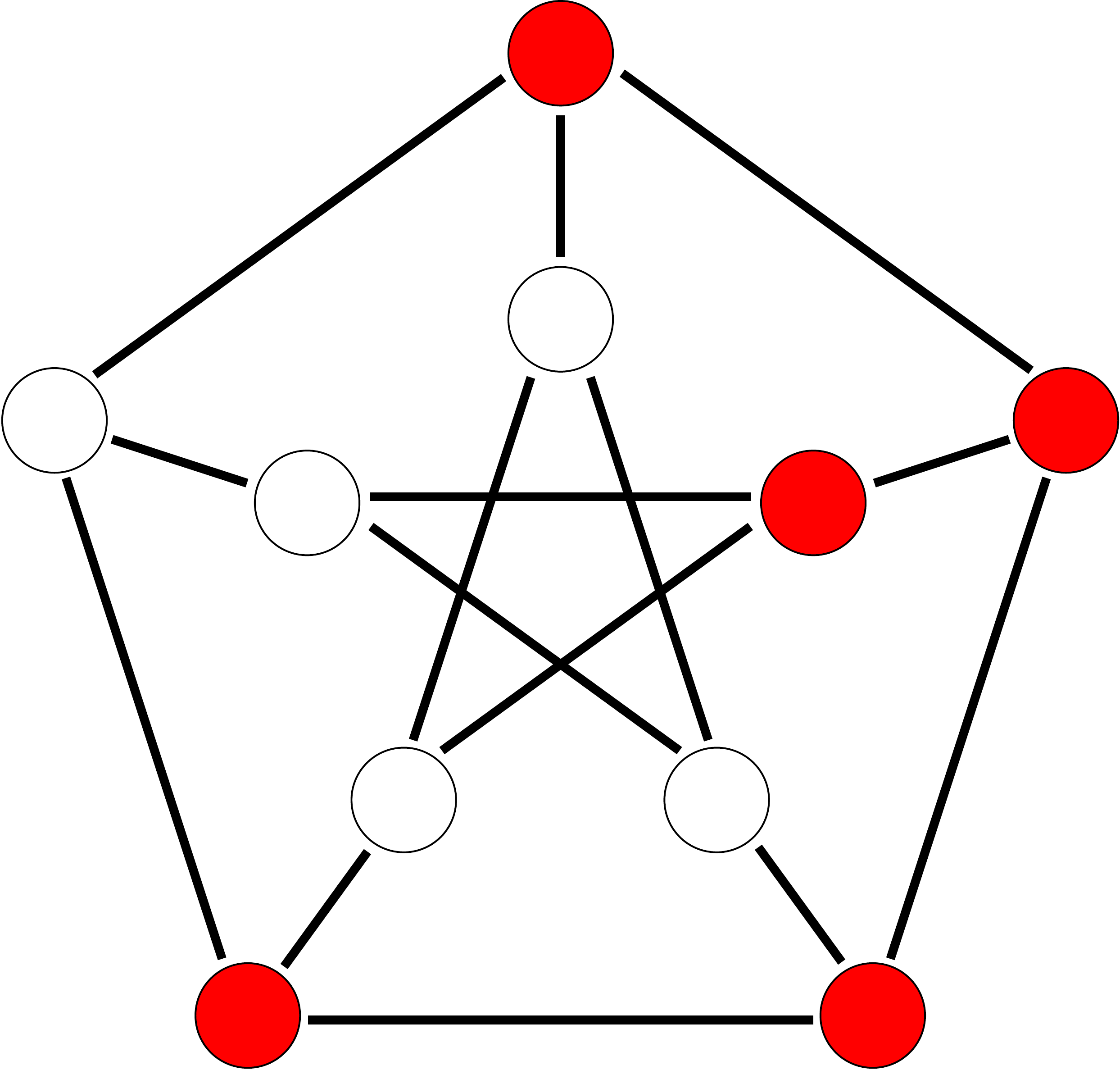}
    \end{minipage}
    \begin{minipage}[t]{0.19\textwidth}
        \includegraphics[width=\textwidth]{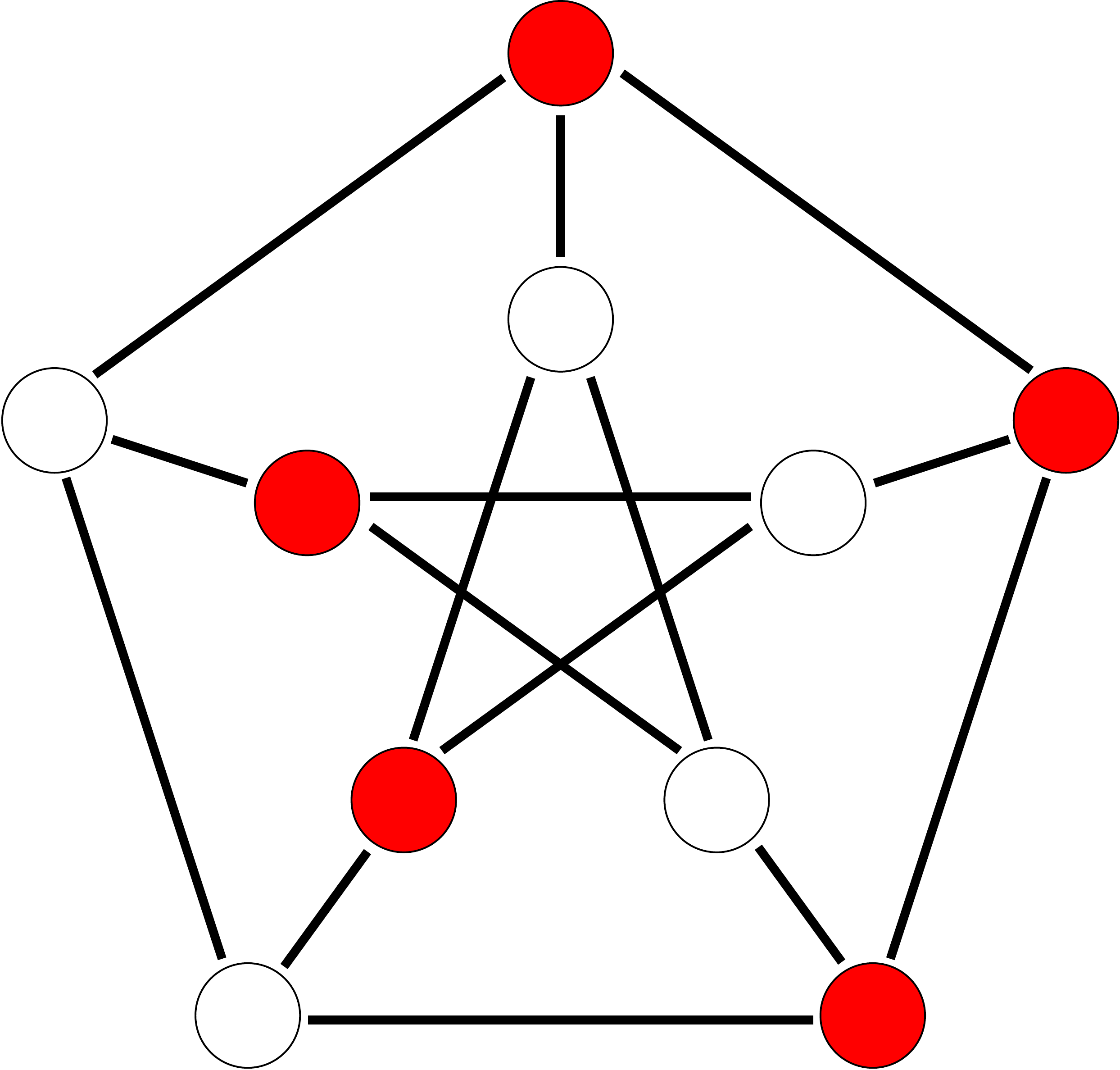}
    \end{minipage}
    \begin{minipage}[t]{0.19\textwidth}
        \includegraphics[width=\textwidth]{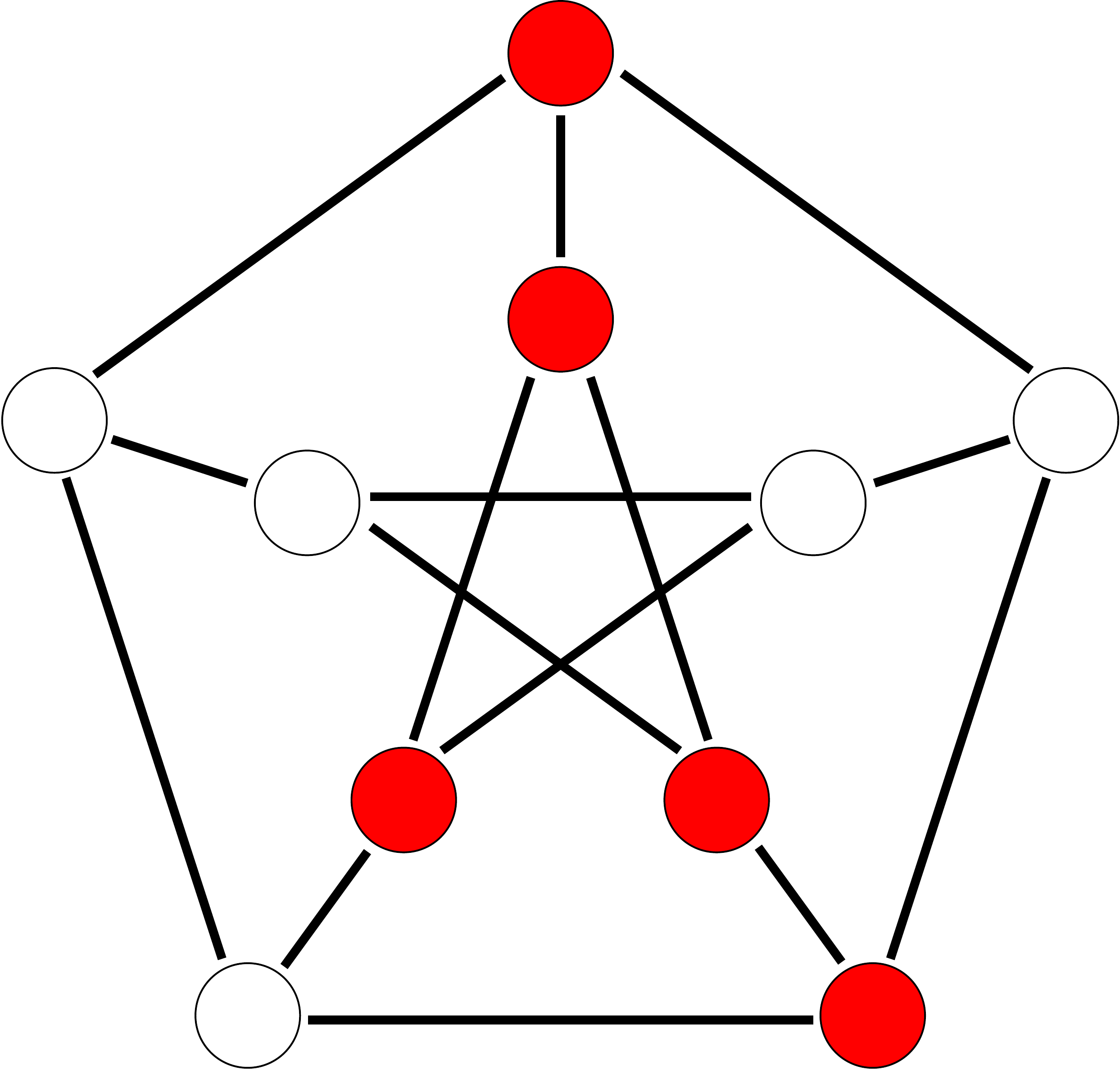}
    \end{minipage}
    \begin{minipage}[t]{0.19\textwidth}
        \includegraphics[width=\textwidth]{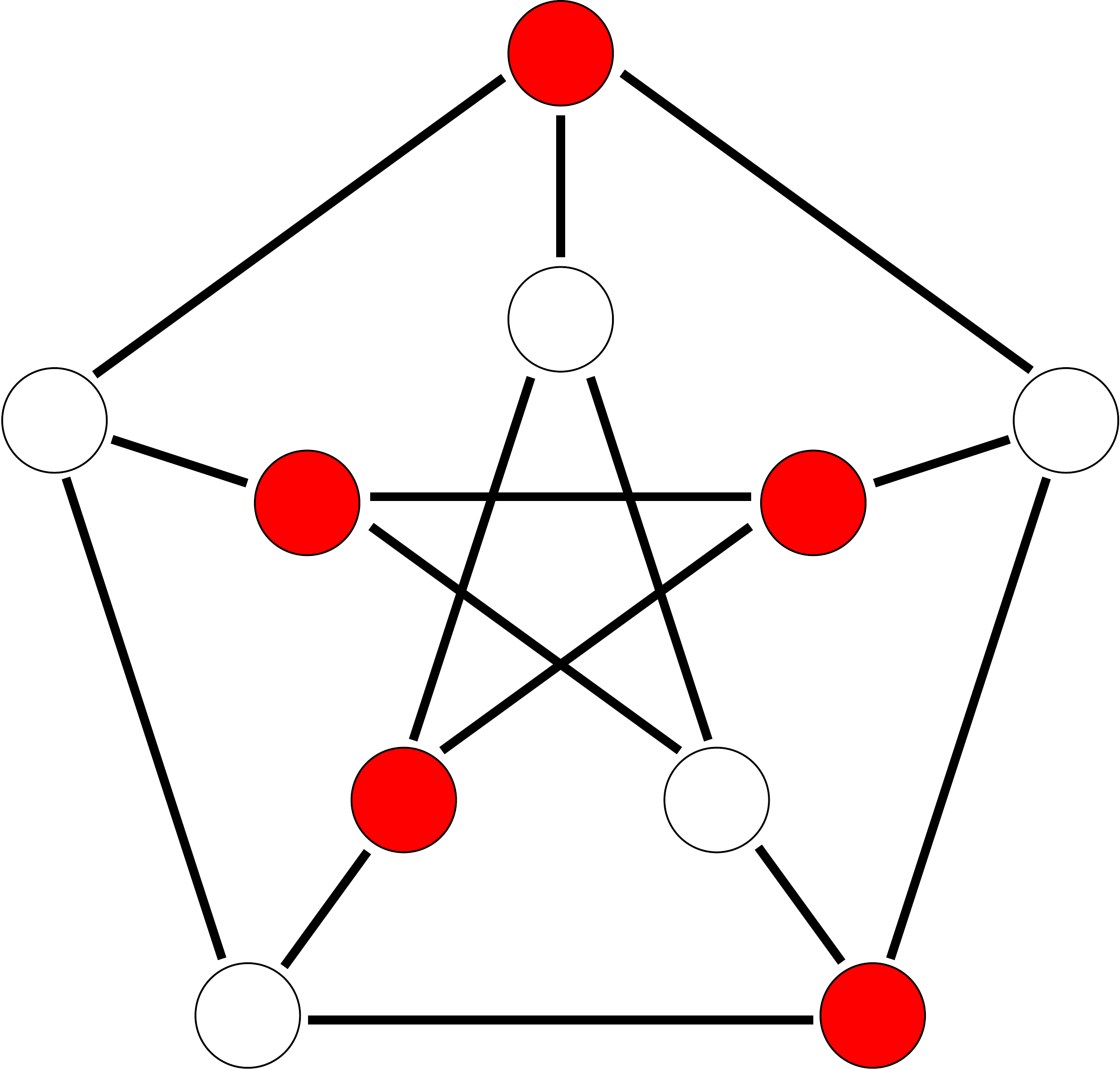}
    \end{minipage}
    \begin{minipage}[t]{0.19\textwidth}
        \includegraphics[width=\textwidth]{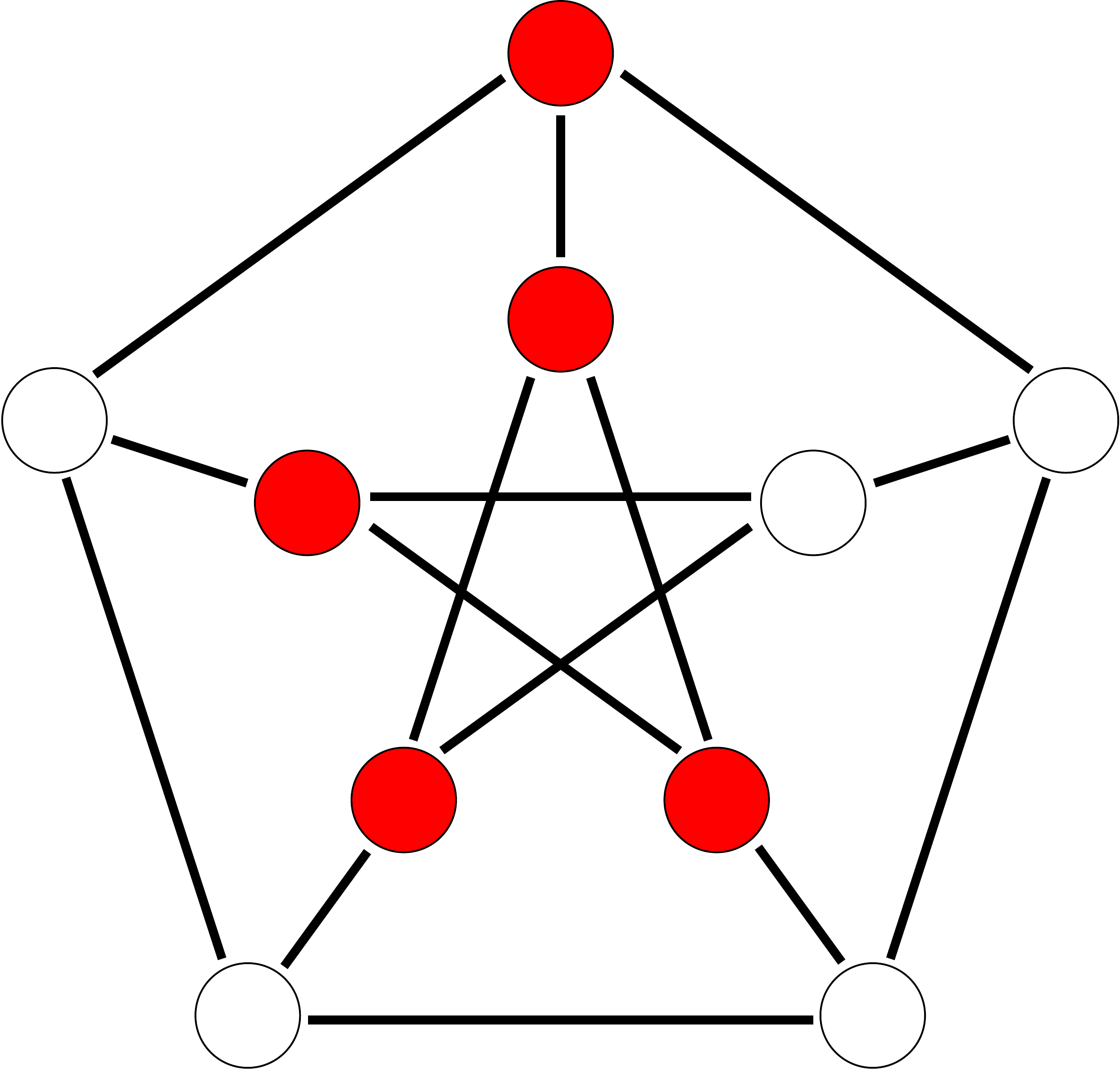}
    \end{minipage}
    \caption{Controllable networks (in green) and noncontrollable networks (in red) with five inputs.}
    \label{fig:Petersen_cnc}
\end{figure}

\end{example}

So far, we focused only on the controllability of a given system, but due to the duality of observability and controllability --~$ (A, C) $ is observable if and only if $ (A^T, C^T) $ is controllable~--, all the results described in this section can also be used to determine observability.

\section{Conclusion}
\label{sec:Conclusion}

In this paper, we analyzed the relationship between symmetries in complex networked systems and the controllability and observability of such systems using results from representation theory and illustrated that symmetries, depending on the symmetry type, might decrease the controllability. Furthermore, we derived necessary conditions on the number of inputs to guarantee controllability and presented an algorithm which computes a sparse input matrix $ B $ based on projections onto the isotypic components. We demonstrated the algorithm using two guiding examples, a network with $ D_4 $ symmetry and the Petersen graph, whose automorphism group is isomorphic to $ S_5 $.

Future work would be to analyze how the results presented within this paper can be extended to time-varying systems of the form
\begin{equation}
    \begin{split}
        \dot{x} &= A(t) \, x + B(t) \, u, \\
        y       &= C(t) \, x + D(t) \, u.
    \end{split}
\end{equation}
For such a system, controllability and observability can be defined in terms of the so-called \emph{Controllability Gramian} and \emph{Observability Gramian}. One possible extension would be to analyze \emph{structural} controllability and observability of these networks using the nonzero patterns of the matrices. The time-varying network structure could result in a temporarily uncontrollable system if a matrix $ A(t_0) $ without symmetries at time $ t_0 $ is changed into a highly symmetric matrix $ A(t_1) $ at time $ t_1 $. Our results show that for such a system the number of required control inputs might increase significantly. Consider for instance the guiding example: If $ C_1 \ne C_2 $, the system is controllable from one node, for $ C_1 = C_2 $, on the other hand, two control inputs are required. Equivalently, adding links to or removing links from a network might cause new symmetries or break existing symmetries and, as a result, change the number of required control inputs.

\bibliographystyle{unsrt}
\bibliography{SCSN}

\begin{thebibliography}{10}

\bibitem{GSS88}
M.~Golubitsky, I.~Stewart, and D.~Schaeffer.
\newblock {\em Singularities and Groups in Bifurcation Theory}.
\newblock Springer, 1988.

\bibitem{RM72}
H.~Rubin and H.~E. Meadows.
\newblock Controllability and observability in linear time-variable networks
  with arbitrary symmetry groups.
\newblock {\em Bell System Technical Journal}, 5(2):507--542, 1972.

\bibitem{WBSS15}
A.~J. Whalen, S.~N. Brennan, T.~D. Sauer, and S.~J. Schiff.
\newblock Observability and controllability of nonlinear networks: The role of
  symmetry.
\newblock {\em Physical Review X}, 5:011005, 2015.

\bibitem{CM14}
A.~Chapman and M.~Mesbahi.
\newblock On symmetry and controllability of multi-agent systems.
\newblock {\em 53rd IEEE Conference on Decision and Control}, pages 625--630,
  2014.

\bibitem{FuHe13}
P.~Fuhrmann and U.~Helmke.
\newblock Reachability, observability and strict equivalence of networks of
  linear systems.
\newblock {\em Mathematics of Control, Signals, and Systems}, 25:437--471,
  2013.

\bibitem{FS92}
A.~F\"assler and E.~Stiefel.
\newblock {\em Group Theoretical Methods and Their Applications}.
\newblock Birkh\"auser, 1992.

\bibitem{HS93}
D.~A. Holton and J.~Sheehan.
\newblock {\em The Petersen graph}.
\newblock Cambridge University Press, 1993.

\bibitem{GAP15}
The GAP~Group.
\newblock {\em {GAP} -- {G}roups, {A}lgorithms, and {P}rogramming}, 2015.
\newblock \url{http://www.gap-system.org}.

\bibitem{Ga96}
K.~Gatermann.
\newblock Semi-invariants, equivariants and algorithms.
\newblock {\em Applicable Algebra in Engineering, Communication and Computing},
  7:105--124, 1996.

\bibitem{TSH01}
H.~Trentelman, A.~Stoorvogel, and M.~Hautus.
\newblock {\em Control Theory for Linear Systems}.
\newblock Springer, 2001.

\bibitem{So90}
E.~D. Sontag.
\newblock {\em Mathematical Control Theory}.
\newblock Springer, 1990.

\end{thebibliography}

\end{document}